\title{On a specialization of Toda eigenfunctions}
\author{Antoine Labelle}
\newcommand{\op}[1]{\operatorname{#1}}
\newcommand{\ul}[1]{\underline{#1}}
\newcommand{\ol}[1]{\overline{#1}}
\newcommand{\bb}[1]{\mathbb{#1}}
\newcommand{\NN}{\mathbb{N}}
\newcommand{\CC}{\mathbb{C}}
\newcommand{\ZZ}{\mathbb{Z}}
\newcommand{\QQ}{\mathbb{Q}}
\newcommand{\PP}{\mathbb{P}}
\newcommand{\KK}{\mathbb{K}}
\newcommand{\Oc}{\mathcal{O}}
\newcommand{\gf}{\mathfrak{g}}
\newcommand{\hf}{\mathfrak{h}}
\newcommand{\uf}{\mathfrak{u}}
\newcommand{\J}{\mathfrak{J}}
\newcommand{\e}{\varepsilon}
\newcommand{\qbinom}{\genfrac{[}{]}{0pt}{}}
\newcommand{\Hom}{\operatorname{Hom}}
\newcommand{\GL}{\operatorname{GL}}
\newcommand{\lam}{\lambda}
\newcommand{\pos}{{\ge 0}}
\newcommand{\one}{\mathbbm{1}}
\newcommand{\til}[1]{\widetilde{#1}}
\newcommand{\Fl}{\text{Fl}}
\newtheorem{lemma}{Lemma}
\newtheorem{theorem}{Theorem}
\newtheorem{proposition}{Proposition}
\newtheorem{definition}{Definition}
\newtheorem{example}{Example}
\newtheorem{remark}{Remark}
\newtheorem{corollary}{Corollary}
\newtheorem{conjecture}{Conjecture}
\numberwithin{lemma}{section}
\numberwithin{theorem}{section}
\numberwithin{proposition}{section}
\numberwithin{definition}{section}
\numberwithin{example}{section}
\numberwithin{remark}{section}
\numberwithin{corollary}{section}
\numberwithin{conjecture}{section}
\newcommand{\Address}{{
  \bigskip
  \footnotesize

  Antoine Labelle, \textsc{Universität Bonn}\par\nopagebreak
  \textit{E-mail address}: \texttt{labelle@math.uni-bonn.de}
}}
\begin{document}

\maketitle

\begin{abstract}
    This paper studies rational functions $\J_\alpha(q)$, which depend on a positive element $\alpha$ of the root lattice of a root system. These functions arise as Shapovalov pairings of Whittaker vectors in Verma modules of highest weight $-\rho$ for quantum groups and as Hilbert series of Zastava spaces, and are related to the Toda system. They are specializations of multivariate functions more commonly studied in the literature. We investigate the denominator of these rational functions and give an explicit combinatorial formula for the numerator in type A. We also propose a conjectural realization of the numerator as the Poincar\'e polynomial of a smooth variety in type A.
\end{abstract}



\section{Introduction}

Let $\gf$ be a split semisimple Lie algebra over $\QQ$ with Cartan subalgebra $\hf$ and let $\Phi \subseteq \hf^*$ be its root system. Fix a choice of positive roots $\Phi_+ \subseteq \Phi$ and let $\alpha_1, \ldots, \alpha_n$ be the corresponding simple roots. Let $\le$ be the partial order on $\hf^*$ defined by $\mu \le \nu$ if and only if $\nu - \mu$ is a sum of positive roots. Let $Q=\ZZ\alpha_1 \oplus \cdots \oplus \ZZ\alpha_n$ be the root lattice and $Q^{\ge 0}=\NN\alpha_1 \oplus \cdots \oplus \NN\alpha_n = \{ \alpha \in \hf^* :\alpha \ge 0\}$ be the positive part of the root lattice. Let $(\cdot, \cdot)$ be the invariant bilinear form on $\hf^*$, normalized so that short roots have length $2$. Let $d_i=\frac{(\alpha_i,\alpha_i)}{2}$ and let $a_{ij} = \frac{(\alpha_i,\alpha_j)}{d_i}$ be the entries of the Cartan matrix. 

We can associate to every $\alpha \in Q^{\ge 0}$ a rational function $\J_\alpha \in \QQ(q)$, defined by the following ``fermionic recursion":

\begin{definition} \label{def:J-function}
   Define $\J_\alpha \in \QQ(q)$, for $\alpha \in Q^{\ge 0}$, by
   \begin{equation} \label{eq:fermionic-recursion}
       \J_\alpha = \sum_{0 \le \beta \le \alpha} \frac{q^{\frac{(\beta, \beta)}{2}}}{(q)_{\alpha-\beta}} \J_\beta
   \end{equation}
   and
   \[ \J_0 = 1,\]
   where $(q)_{\alpha}=\prod_{i=1}^n \prod_{j=1}^{a_i} (1-q^{d_i j})$ for $\alpha= \sum a_i \alpha_i \in Q^{\ge 0}$.
\end{definition}

It is clear that (\ref{eq:fermionic-recursion}) together with the initial condition $\J_0=1$ uniquely determines the rational functions $\J_\alpha$ since, for $\alpha>0$, (\ref{eq:fermionic-recursion}) can be rewritten in the more obviously recursive form
\begin{equation} \label{eq:rewritten-recursion}
    \J_\alpha = \frac{1}{1-q^{\frac{(\alpha, \alpha)}{2}}} \sum_{0 \le \beta < \alpha} \frac{q^{\frac{(\beta, \beta)}{2}}}{(q)_{\alpha-\beta}} \J_\beta.
\end{equation}

These rational functions in $q$ arise naturally in different contexts. Most importantly, $\J_\alpha$ has a geometric interpretation as the Hilbert series of a Zastava space \cite{GL, BF} and a representation-theoretic interpretation as a pairing of Whittaker vectors in the Verma module of highest weight $-\rho$ for a quantum group \cite{Etingof, Sev2, FFJMM}\footnote{A connection between these two interpretations was found in type A by Braverman and Finkelberg \cite{BF2}, who defined a quantum group action on the equivariant $K$-theory of Laumon's resolution of the Zastava space, isomorphic to a Verma module.}. We will review these interpretations in Sections \ref{sec:geo-interpretation} and \ref{sec:rt-interpretation}. 

The $\J_\alpha$'s also satisfy different recursions related to the Toda lattice, which depend on a choice of orientation and a representation. In type $A$, with an orientation compatible with the usual ordering of the simple roots, the simplest such recursion takes the following form \cite[Proposition A.1]{FFJMM}.

\begin{proposition}[Toda recursion]
    In type $A_n$, with the standard ordering of the simple roots, and $\alpha=\sum_{i=1}^n a_i \alpha_i$, we have the recursion
    \begin{equation} \label{eq:Toda-recursion}
        \left(\sum_{i=0}^n (q^{a_{i+1}-a_i} -1)\right) \J_\alpha =  \sum_{i=1}^n q^{a_{i+1}-a_i} \J_{\alpha-\alpha_i},
    \end{equation}
    with the conventions that $a_i=0$ if $i\not\in\{1, \ldots, n\}$ and $\J_\alpha=0$ if $\alpha \not\in Q^\pos$.
\end{proposition}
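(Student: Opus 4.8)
The plan is to package the $\J_\alpha$ into the generating series $\Phi(z)=\sum_{\alpha\ge 0}\J_\alpha z^\alpha$ (with $z^\alpha=z_1^{a_1}\cdots z_n^{a_n}$) and to read both recursions as operator identities on $\QQ(q)[[z_1,\dots,z_n]]$. Writing $G(z)=\prod_{i=1}^n (z_i;q)_\infty^{-1}=\sum_{\gamma}\frac{z^\gamma}{(q)_\gamma}$ and $H(z)=\sum_{\beta}q^{(\beta,\beta)/2}\J_\beta z^\beta$, the fermionic recursion of Definition \ref{def:J-function} is exactly the factorization $\Phi=G\cdot H$. Let $T_i$ be the $q$-shift $z_i\mapsto qz_i$, so $T_i z^\beta=q^{b_i}z^\beta$. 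A direct check on coefficients of $z^\alpha$ shows that the Toda recursion (\ref{eq:Toda-recursion}) is equivalent to the eigenvalue equation
\begin{equation*}
\mathcal{H}\,\Phi=(n+1)\,\Phi,\qquad \mathcal{H}=T_1+\sum_{i=1}^n T_{i+1}T_i^{-1}(1-z_i),
\end{equation*}
with the convention $T_{n+1}=\mathrm{id}$; here $\mathcal{H}$ can be identified with the $q$-Toda Hamiltonian attached to the vector representation of $\slf_{n+1}$, and the eigenvalue $n+1$ is its dimension.

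The first substantive step is to exploit the functional equation $T_iG=(1-z_i)G$, which is immediate from $(z_i;q)_\infty=(1-z_i)(qz_i;q)_\infty$. Commuting $\mathcal{H}$ past the factor $G$ with the help of this identity gives, for any series $f$,
\begin{equation*}
\mathcal{H}(Gf)=G\,\mathcal{K}f,\qquad \mathcal{K}=\sum_{i=0}^n (1-z_{i+1})\,T_{i+1}T_i^{-1},
\end{equation*}
with the conventions $z_0=z_{n+1}=0$ and $T_0=T_{n+1}=\mathrm{id}$. Taking $f=H$ and cancelling the invertible factor $G$ reduces the theorem to the ``denominator-free'' equation $\mathcal{K}H=(n+1)H$, i.e.\ to a linear recursion among the Gaussian-weighted coefficients $c_\beta:=q^{(\beta,\beta)/2}\J_\beta$. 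The special feature of type $A$ enters here: since $(\alpha_j,\alpha_j)/2=1$, one has $c_{\beta-\alpha_j}=q^{\,1-(\beta,\alpha_j)}c_\beta\,(\J_{\beta-\alpha_j}/\J_\beta)$, and this shift turns the quadratic Gaussian into exactly the nearest-neighbour exponents $q^{a_{i+1}-a_i}$ of (\ref{eq:Toda-recursion}).

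I would then prove $\mathcal{K}H=(n+1)H$ by induction on the height $\sum_i b_i$: the coefficient of $z^\beta$ is a relation linking $c_\beta$ to the $c_{\beta-\alpha_j}$, into which one substitutes the rewritten recursion (\ref{eq:rewritten-recursion}) to remove the top term $c_\beta$ and then applies the inductive hypothesis. The main obstacle is genuinely this last step: the reduction through $G$ is formal (the equation $\mathcal{K}H=(n+1)H$ is, after all, equivalent to the original statement), so the real content is the cancellation of the cross terms between the $q$-Pochhammer factors $1/(q)_{\beta-\gamma}$ and the Gaussian weights $q^{(\gamma,\gamma)/2}$, which the operator formalism does not supply and which must be verified by an explicit $q$-series manipulation. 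Conceptually this cancellation is forced by the interpretation to be developed in Sections \ref{sec:geo-interpretation}--\ref{sec:rt-interpretation}, where $\J_\alpha$ is a Shapovalov pairing of Whittaker vectors and (\ref{eq:Toda-recursion}) expresses the action of the vector-representation transfer operator, self-adjoint for the Shapovalov form; that route predicts the coefficients $q^{a_{i+1}-a_i}$ as Cartan eigenvalues on the weight spaces and would yield a cleaner, if less elementary, derivation.
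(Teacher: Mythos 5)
There is a genuine gap, and you have in fact named it yourself. First, for calibration: the paper does not prove this proposition at all --- it is imported from \cite[Proposition A.1]{FFJMM} --- so the relevant benchmark is the proof in that appendix, which consists precisely of the explicit $q$-series cancellation that your proposal defers. Your formal setup is correct as far as it goes: the factorization $\Phi=G\cdot H$ does encode (\ref{eq:fermionic-recursion}), the identity $T_iG=(1-z_i)G$ holds, the intertwining $\mathcal{H}(Gf)=G\,\mathcal{K}f$ checks out, and $\mathcal{H}\Phi=(n+1)\Phi$ is indeed coefficient-wise equivalent to (\ref{eq:Toda-recursion}). But the reduction is exactly circular, not merely ``formal'': writing $c_\beta=q^{(\beta,\beta)/2}\J_\beta$ and using $(\beta,\beta)/2-(\beta-\alpha_j,\beta-\alpha_j)/2=2b_j-b_{j-1}-b_{j+1}-1$, the coefficient of $z^\beta$ in $\mathcal{K}H=(n+1)H$ becomes, after dividing by $q^{(\beta,\beta)/2}$, \emph{verbatim} the recursion (\ref{eq:Toda-recursion}) at $\beta$ (the prefactor $q^{b_j-b_{j-1}-1}$ produced by $(1-z_{j})T_{j}T_{j-1}^{-1}$ is converted by the Gaussian shift into exactly $q^{b_{j+1}-b_j}$). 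So the gauge transformation consumes none of the difficulty; it maps the statement to itself.

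Consequently the proposed induction cannot close as described. The height-$|\beta|$ coefficient identity relates $c_\beta$ only to the $c_{\beta-\alpha_j}$, so the inductive hypothesis (the same relation at lower heights) supplies no direct information about it; eliminating $\J_\beta$ via (\ref{eq:rewritten-recursion}) produces a double sum over all $\gamma<\beta$ whose coefficients mix the Pochhammer factors $1/(q)_{\beta-\gamma}$ with the Gaussian weights $q^{(\gamma,\gamma)/2}$, and establishing its vanishing --- by deploying the lower-height Toda relations against every $\gamma$ simultaneously, or by a direct $q$-binomial telescoping of the kind used in the proof of Theorem \ref{thm:main-thm} --- is the entire content of the proposition. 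You explicitly acknowledge that this step ``must be verified by an explicit $q$-series manipulation'' and do not perform it; the closing appeal to Whittaker vectors and a self-adjoint transfer operator is likewise only a gesture toward the Etingof--Sevostyanov proof, not an argument. What you have is a correct and well-organized reformulation (the standard $q$-Toda operator picture) plus a plan; the proof itself is missing.
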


\begin{remark} \label{rmk:multivariate}
    In the literature, the functions $\J_\alpha$ usually appear with $n$ extra variables $z_1, \ldots, z_n$; they can be defined by the recursion
    \[\J_\alpha(q, z_1, \ldots, z_n) = \sum_{0 \le \beta \le \alpha} \frac{z^\beta q^{\frac{(\beta, \beta)}{2}}}{(q)_{\alpha-\beta}} \J_\beta(q, z_1, \ldots, z_n),\]
    where $z^\beta = \prod_{i=1}^n z_i^{b_i}$ for $\beta = \sum b_i \beta_i \in Q^{\ge 0}$. The $\J_\alpha$ defined in Definition \ref{def:J-function} is the specialization at $z_1=\cdots=z_n=1$ of this multivariate version. 
    Under the geometric interpretation of Section \ref{sec:geo-interpretation}, this multivariate enhancement encodes the character of a $T\times \CC^\times$ action on the Zastava space rather than just a $\CC^\times$ action. Under the representation-theoretic interpretation of Section \ref{sec:rt-interpretation}, it corresponds to considering Whittaker vectors in Verma modules of arbitrary highest weight (or in a universal Verma module). 
\end{remark}

While the particular specialization that we consider here has not, to the author's knowledge, been studied by itself before, it has some nice properties that do not generalize to the multivariate enhancement of Remark \ref{rmk:multivariate}. In particular, we will prove in Section \ref{sec:denominator} that $\J_\alpha$ has a very simple denominator (which is not explained by the recursion (\ref{eq:fermionic-recursion})):

\begin{theorem} \label{thm:denominator}
    For every $\alpha\in Q^\pos$, we have
    \[ \J_\alpha \in \frac{1}{(q)_\alpha^2}\ZZ[q].\]
\end{theorem}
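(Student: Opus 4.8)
The plan is to prove the denominator-free reformulation
\[ f_\alpha \;:=\; (q)_\alpha^2\,\J_\alpha \;\in\; \ZZ[q] \]
by induction along the partial order $\le$ on $Q^\pos$, with base case $f_0 = 1$. First I would take the recursive form (\ref{eq:rewritten-recursion}), multiply through by $(q)_\alpha^2$, and substitute $\J_\beta = f_\beta/(q)_\beta^2$ for $\beta<\alpha$, obtaining
\[ \bigl(1 - q^{(\alpha,\alpha)/2}\bigr)\, f_\alpha \;=\; \sum_{0 \le \beta < \alpha} \frac{(q)_\alpha^2}{(q)_{\alpha-\beta}\,(q)_\beta^2}\; q^{(\beta,\beta)/2}\, f_\beta \;=:\; g_\alpha. \]
The structural observation that makes everything work is that each coefficient on the right is an honest polynomial: writing
\[ \frac{(q)_\alpha^2}{(q)_{\alpha-\beta}\,(q)_\beta^2} \;=\; \frac{(q)_\alpha}{(q)_\beta}\cdot \frac{(q)_\alpha}{(q)_{\alpha-\beta}\,(q)_\beta}, \]
the first factor is the product of binomials $\prod_i\prod_{j=b_i+1}^{a_i}(1-q^{d_i j})$, and the second is the $q$-multinomial $\prod_i \qbinom{a_i}{b_i}_{q^{d_i}}$, a product of Gaussian binomial coefficients; both lie in $\NN[q]$. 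Together with the inductive hypothesis $f_\beta\in\ZZ[q]$ this gives $g_\alpha\in\ZZ[q]$, hence the a priori bound $f_\alpha \in \frac{1}{1-q^{(\alpha,\alpha)/2}}\ZZ[q]$.

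The remaining task is to eliminate the spurious factor $1-q^{(\alpha,\alpha)/2}$, i.e. to show that $f_\alpha$ is regular at every root of unity $\zeta$ with $\zeta^{(\alpha,\alpha)/2}=1$ (at all other points $g_\alpha/(1-q^{(\alpha,\alpha)/2})$ is visibly regular); equivalently $g_\alpha(\zeta)=0$. For this I would pass to local orders of vanishing. For $\gamma=\sum_i c_i\alpha_i$ set $v_\zeta(\gamma)=\op{ord}_{q=\zeta}(q)_\gamma = \sum_i \lfloor c_i/r_i\rfloor$, where $r_i = d/\gcd(d,d_i)$ and $d=\op{ord}(\zeta)$. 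Superadditivity of the floor gives $v_\zeta(\beta)+v_\zeta(\alpha-\beta)\le v_\zeta(\alpha)$, and since $1-q^{(\alpha,\alpha)/2}$ is squarefree it has only a simple zero at $\zeta$. Counting orders in the recursion then yields $\op{ord}_\zeta\J_\alpha \ge -2v_\zeta(\alpha)-1$, where the deficit of $1$ comes solely from the \emph{boundary} indices $\beta$ with $v_\zeta(\alpha-\beta)=0$ and $v_\zeta(\beta)=v_\zeta(\alpha)$ — precisely those $\beta$ lying in the same residue block $k_i r_i \le b_i \le a_i$ (with $k_i=\lfloor a_i/r_i\rfloor$) as $\alpha$ in each coordinate.

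The crux, and the step I expect to be the main obstacle, is thus to show that the top-order principal parts of these boundary terms cancel, so that $g_\alpha(\zeta)=0$. Indeed, the same order computation shows $g_\alpha(\zeta)$ is exactly the sum over boundary $\beta$ of $\frac{(q)_\alpha^2}{(q)_{\alpha-\beta}(q)_\beta^2}\big|_{q=\zeta}\,\zeta^{(\beta,\beta)/2}f_\beta(\zeta)$, the other coefficients vanishing at $\zeta$. My plan is to compute these surviving leading factors explicitly and reduce the required vanishing to a finite identity indexed by the residue part $\sigma = \alpha - \sum_i k_i r_i\,\alpha_i$: because the boundary indices are exactly $\{\beta : 0\le \alpha-\beta\le\sigma\}$, this identity should itself be an instance of the fermionic recursion (\ref{eq:fermionic-recursion}) specialized at $q=\zeta$, so that matching it against the simple zero of $1-q^{(\alpha,\alpha)/2}$ on the left forces the cancellation and closes the induction. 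Carrying out this leading-coefficient bookkeeping — in effect exhibiting a "factorization at roots of unity" of $\J_\alpha$ governed by the reduced element $\sigma$ — is the delicate point; one cross-check that the exponent $2$ is correct is the geometric expectation that the Zastava space $Z^\alpha$ is Cohen--Macaulay of dimension $2\sum_i a_i$ with a homogeneous system of parameters of $\CC^\times$-weights $\{d_i j : 1\le j\le a_i\}$ taken with multiplicity two.
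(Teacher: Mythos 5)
Your reduction is correct as far as it goes, but it stops exactly where the theorem begins. The first half is fine: the coefficients $\frac{(q)_\alpha^2}{(q)_{\alpha-\beta}(q)_\beta^2}$ are indeed polynomials, so induction gives $(1-q^{(\alpha,\alpha)/2})f_\alpha = g_\alpha \in \ZZ[q]$, and since $1-q^{(\alpha,\alpha)/2}$ is squarefree the theorem is equivalent to $g_\alpha(\zeta)=0$ at each of its roots $\zeta$; your localization of $g_\alpha(\zeta)$ to the boundary indices $k_ir_i\le b_i\le a_i$ is also correct. But that vanishing \emph{is} the theorem, and your induction hypothesis ($f_\beta\in\ZZ[q]$) gives no control whatsoever over the values $f_\beta(\zeta)$ that enter the surviving boundary sum. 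The decisive step --- that the boundary identity ``should itself be an instance of the fermionic recursion specialized at $q=\zeta$'' --- is asserted, not proved, and there is no mechanism in your setup to prove it: matching the sum over $\{\beta : \alpha-\sigma\le\beta\le\alpha\}$ with a recursion for the reduced element $\sigma$ would require a root-of-unity factorization of the numerators $f_\beta(\zeta)$ (a $q$-Lucas-type property) that is neither formulated nor built into the induction; moreover the fermionic recursion for $\sigma$ at $q=\zeta$ is governed by the factor $1-\zeta^{(\sigma,\sigma)/2}$, which has no reason to vanish when $\zeta^{(\alpha,\alpha)/2}=1$, so even the shape of the hoped-for identity does not match. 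Your ``cross-check'' via a homogeneous system of parameters of weights $\{d_ij\}$ taken twice is not a known fact either --- it is precisely the paper's open Conjecture \ref{conj:regular-sequence}.

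This gap is not incidental: the paper proves Theorem \ref{thm:denominator} by a completely different, representation-theoretic route, and explicitly states in its conclusion that no elementary proof from the fermionic recursion is known, even for the $q\to 1$ shadow of the statement (the bound on the denominator of $K_\alpha$ in Remark \ref{rmk:K-limit}). The paper's argument shows by induction that the Whittaker vector satisfies $\theta^{-\rho}_\alpha \in \frac{1}{[\alpha]_v!}\mathcal{V}^{-\rho}_\alpha$ for Lusztig's integral form: pairing $\theta^{-\rho}_\alpha$ against divided-power vectors $\ol{F}_i^{(k)}x$ lands in $\frac{1}{[\alpha]_v!}A$, and the key input converting this pairing bound into lattice membership is Proposition \ref{prop:perfect-pairing} --- the Shapovalov form is a \emph{perfect} pairing on the integral forms at highest weight $-\rho$, because the De Concini--Kac determinant at $\lambda=-\rho$ is a unit in $A$ (it equals, up to sign, the square of the change of basis between PBW monomials and divided powers). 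Then $(\theta^{-\rho}_\alpha,\bar\theta^{-\rho}_\alpha)\in\frac{1}{([\alpha]_v!)^2}A$, and Theorem \ref{thm:rt-interpretation} together with $A\cap\QQ(v^2)=\ZZ[v^{\pm 2}]$ and regularity at $q=0$ yields the claim. One instinct in your sketch is genuinely right: the cancellation you seek among distinct boundary $\beta$'s can only happen because $z_1=\cdots=z_n=1$ (in the multivariate version of Remark \ref{rmk:multivariate} the terms carry distinct monomials $z^\beta$ and the denominator bound fails), just as the paper's proof hinges on the special weight $-\rho$; but as it stands the crucial identity is an unproven conjecture, so the proposal does not constitute a proof.
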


Moreover, the numerator $(q)_\alpha^2\J_\alpha$ is a palindromic polynomial (see Remark \ref{rmk:palindromic}), which we expect to have positive coefficients (see Conjecture \ref{conj:pos-coeffs}) and be unimodal (see Conjecture \ref{conj:unimodal}).

In addition to Theorem \ref{thm:denominator}, the other main contribution of this paper is the following explicit combinatorial formula for this numerator in type $A$, which will be proven in Section \ref{sec:formula}. 

\begin{theorem} \label{thm:combi-formula}
    In type $A_n$, with the standard ordering of the simple roots, and $\alpha=\sum_{i=1}^n a_i \alpha_i$, we have
    \begin{equation} \label{eq:main-formula}
    \J_\alpha(q) = \frac{1}{(q)_\alpha^2}\sum_{\underline{m}} q^{D(\underline{m})}\prod_{k=1}^{n-1} \prod_{i=1}^k \qbinom{m_{k+1,i}}{m_{k,i}}_q \qbinom{m_{k+1,i+1}}{m_{k,i}}_q,
    \end{equation}
    where the sum is over all triangular arrays $\underline{m}=(m_{k,i})_{1\le i\le  k \le n}$ of nonnegative integers satisfying $m_{n,i}=a_i$ and $m_{k,i} \le m_{k+1,i},m_{k+1,i+1}$ for every $1\le i \le k \le n-1$,
    \[ D(\underline{m})= \sum_{k=1}^{n-1} \left(\sum_{i=1}^k m_{k,i}^2 - \sum_{i=1}^{k-1}m_{k,i}m_{k,i+1}\right),\]
    and
    \[ \qbinom{a}{b}_q = \frac{\prod_{j=1}^{a} (1-q^j)}{\prod_{j=1}^{b} (1-q^j) \prod_{j=1}^{a-b} (1-q^j)} \]
    are the Gaussian binomial coefficients.
\end{theorem}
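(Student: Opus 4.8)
The plan is to set $F_\alpha(q)$ equal to the right-hand side of (\ref{eq:main-formula}) and to prove $F_\alpha = \J_\alpha$ by checking that the family $(F_\alpha)$ satisfies the fermionic recursion (\ref{eq:fermionic-recursion}) together with $F_0 = 1$; by Definition \ref{def:J-function} this identifies the two families. The base case $\alpha=0$ is the empty array, and I would then induct on the height $\sum_i a_i$. The guiding observation is that in type $A_n$ one has $\frac{(\alpha,\alpha)}{2} = \sum_{i=1}^n a_i^2 - \sum_{i=1}^{n-1}a_ia_{i+1}$, which is exactly the quadratic form $Q(\lambda)=\sum_i\lambda_i^2-\sum_i\lambda_i\lambda_{i+1}$ evaluated on the bottom row $(a_1,\dots,a_n)=(m_{n,1},\dots,m_{n,n})$ --- and this is precisely the one row that does \emph{not} contribute to $D(\underline m)$. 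This suggests working with $\til N_\alpha := q^{(\alpha,\alpha)/2}(q)_\alpha^2 F_\alpha$, so that every row of the array contributes its quadratic form symmetrically.

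The triangular arrays with the conditions $m_{k,i}\le m_{k+1,i},m_{k+1,i+1}$ are Gelfand--Tsetlin-type patterns, and I would exploit this to organize the sum as an iterated transfer. Peeling off the top row of the pattern and noting that $(a_1,\dots,a_n)$ enters the summand only through the factor $\prod_{i=1}^{n-1}\qbinom{a_i}{m_{n-1,i}}_q\qbinom{a_{i+1}}{m_{n-1,i}}_q$ (and not through $D$), the numerator $(q)_\alpha^2 F_\alpha$ factors as a sum over the penultimate row $\nu=(m_{n-1,i})$ of a rank-$(n-1)$ numerator weighted by these $q$-binomials. Matching this layered structure against the fermionic recursion reduces the inductive step to a single Gaussian-binomial summation identity of $q$-Vandermonde / $q$-Pfaff--Saalsch\"utz type, relating $\sum_\lambda q^{Q(\lambda)}\prod_i\qbinom{\mu_i}{\lambda_i}_q\qbinom{\mu_{i+1}}{\lambda_i}_q$ to the factors $\frac{1}{(q)_{\alpha-\beta}}$ appearing in (\ref{eq:fermionic-recursion}). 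The $n=1$ and $n=2$ cases, where the numerators are $1$ and $\sum_m q^{m^2}\qbinom{a_1}{m}_q\qbinom{a_2}{m}_q$ respectively, already exhibit this mechanism and serve as a check.

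I expect the main obstacle to be precisely this $q$-binomial identity: the sum defining $\J_\alpha$ in (\ref{eq:fermionic-recursion}) runs over all $0\le\beta\le\alpha$ at once, whereas the Gelfand--Tsetlin transfer naturally increments one row at a time, so reconciling the two requires a nontrivial resummation rather than a term-by-term comparison. Two alternative routes may tame this. First, one could instead verify the Toda recursion (\ref{eq:Toda-recursion}): since it only shifts $\alpha$ by a single simple root $\alpha_i$, and since $a_i=m_{n,i}$ occurs in exactly two binomials $\qbinom{a_i}{m_{n-1,i}}_q$ and $\qbinom{a_i}{m_{n-1,i-1}}_q$ of the top transition, the induction would be driven by local $q$-Pascal relations, at the cost of tracking the factors $(1-q^{a_i})^2=(q)_\alpha^2/(q)_{\alpha-\alpha_i}^2$ and the coefficients $q^{a_{i+1}-a_i}$. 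Second, the rank-reduction exhibited above should coincide with the geometric fibration of the type-$A_n$ Zastava/Laumon space over the type-$A_{n-1}$ one (equivalently, a branching rule for the Whittaker vectors of Section \ref{sec:rt-interpretation}), which would furnish the transfer identity conceptually and connect the formula to known Gelfand--Tsetlin expansions of $q$-Whittaker functions; I would use this as a consistency check and, if the direct resummation proves recalcitrant, as the backbone of the argument.
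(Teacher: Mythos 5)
Your first fallback --- abandoning the fermionic recursion in favor of the Toda recursion (\ref{eq:Toda-recursion}), after peeling off the bottom row of the pattern --- is precisely the paper's proof strategy. The paper packages your ``transfer'' as Definition \ref{def:A-numerator}: the numerator $H_\alpha$ satisfies the rank-reduction recursion (\ref{eq:A-recursion}), $H_\alpha=\sum_\beta\prod_j\qbinom{a_j}{b_j}_q\qbinom{a_{j+1}}{b_j}_q q^{(\beta,\beta)/2}H_\beta$ over $\beta\in Q_{n-1}^{\ge 0}$, and Theorem \ref{thm:main-thm} is proved by induction on the rank $n$ (not on the height $\sum_i a_i$, which is the natural induction for your primary route) by checking that $H_\alpha/(q)_\alpha^2$ satisfies the Toda recursion; since the coefficient $\sum_i(q^{a_{i+1}-a_i}-1)$ is nonzero for $\alpha>0$, this pins down $\J_\alpha$. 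Your primary route through the fermionic recursion is the one the paper avoids, and you correctly diagnose why: the fermionic sum over all $0\le\beta\le\alpha$ does not mesh with a one-row-at-a-time transfer, and you do not produce the $q$-Pfaff--Saalsch\"utz-type resummation that would be needed, so as stated that route has an unresolved gap.

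Even on the Toda route, however, your plan stops short of the crux. After substituting (\ref{eq:A-recursion}) into both sides and using $(1-q^{a})\qbinom{a-1}{b}_q=(1-q^{a-b})\qbinom{a}{b}_q$ to absorb the factors $(1-q^{a_i})^2$, one must show the vanishing of
\begin{equation*}
\sum_\beta\sum_i\Bigl((q^{a_{i+1}-a_i}-1)-q^{a_{i+1}-a_i}(1-q^{a_i-b_i})(1-q^{a_i-b_{i-1}})\Bigr)\prod_j\qbinom{a_j}{b_j}_q\qbinom{a_j}{b_{j-1}}_q q^{(\beta,\beta)/2}H_\beta,
\end{equation*}
and this is not handled by ``local $q$-Pascal relations'' alone. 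The decisive ingredient is the exchange identity of Lemma \ref{lem:technical-sum}, which rewrites the inner sum over $i$ as $\sum_i\bigl((q^{b_{i+1}-b_i}-1)-q^{b_i-b_{i-1}}(1-q^{a_i-b_i})(1-q^{a_{i+1}-b_i})\bigr)$ --- i.e., it trades the Toda operator in the $a$-variables for the Toda operator in the $b$-variables plus correction terms. Only then can the inductive hypothesis (the rank-$(n-1)$ Toda recursion applied to $H_\beta$) be invoked, and the remaining terms are killed by a telescoping sum over each $b_i$ using $(1-q^{a-b})\qbinom{a}{b}_q=(1-q^{b+1})\qbinom{a}{b+1}_q$. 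So your skeleton is the right one and coincides with the paper's, but the identity that makes the induction close --- and the final telescoping that disposes of the cross terms --- is exactly the missing content of your proposal, and it is where essentially all the work lies.
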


Theorem \ref{thm:combi-formula} can be seen as a $q$-analog of (special cases of) formulas of Ishii and Stade \cite{IS} and of Batyrev et al. \cite{BCKvS}. See Remark \ref{rmk:K-limit} for more details.

\begin{example} \label{ex:A2}
    In type $A_2$, if $\alpha=a_1\alpha_1 + a_2\alpha_2$, the formula (\ref{eq:main-formula}) gives
    \[ \J_\alpha = \frac{1}{(q)_\alpha^2}\sum_{i=0}^{\min(a_1,a_2)} q^{i^2}\qbinom{a_1}{i}_q \qbinom{a_2}{i}_q, \]
    which simplifies to 
    \[ \J_\alpha = \frac{1}{(q)_\alpha^2}\qbinom{a_1+a_2}{a_1}_q.\]
\end{example}

\begin{example} \label{ex:A3}
    In type $A_3$, if $\alpha = a_1\alpha_1 + a_2\alpha_2 + a_3\alpha_3$, the formula (\ref{eq:main-formula}) gives
    \[ \J_\alpha(q) = \frac{1}{(q)_\alpha^2}\sum_{i}\sum_{j}\sum_{k} q^{i^2+j^2-ij +k^2} \qbinom{a_1}{i}_q \qbinom{a_2}{i}_q \qbinom{a_2}{j}_q \qbinom{a_3}{j}_q \qbinom{i}{k}_q \qbinom{j}{k}_q, \]
    where the sums are over $0\le i \le a_1, a_2$, $0\le j\le a_2,a_3$ and $0\le k \le i,j$.
    The sum over $k$ simplifies as in Example \ref{ex:A2}, so we can also write this as
    \[ \J_\alpha(q) = \frac{1}{(q)_\alpha^2} \sum_{i=0}^{\min(a_1,a_2)} \sum_{j=0}^{\min(a_2,a_3)} q^{i^2+j^2-ij} \qbinom{a_1}{i}_q \qbinom{a_2}{i}_q \qbinom{a_2}{j}_q \qbinom{a_3}{j}_q \qbinom{i+j}{i}_q.\]
    As an interesting special case, if we take $\alpha=m(\alpha_1+\alpha_2+\alpha_3)$ for $m\in \NN$ and specialize at $q=1$, we obtain
    \[ (q)_\alpha^2\J_\alpha|_{q=1} = \sum_{i=0}^{m} \sum_{j=0}^{m}\binom{m}{i}^2\binom{m}{j}^2 \binom{i+j}{i}.\]
    By \cite{MO}, this is equal to the $m^\text{th}$ Ap\'ery number \cite{OEIS}. The polynomials $(q)_\alpha^2\J_\alpha$ for this particular sequence of $\alpha$'s can therefore be seen as $q$-analogs of Apéry numbers.
\end{example}

\begin{example}  \label{ex:narayana}
    For $\alpha=\alpha_1+\ldots+\alpha_n$ the highest root in type $A_n$, the formula  (\ref{eq:main-formula}) yields a Narayana polynomial as the numerator of $\J_\alpha$. See Remark \ref{rmk:narayana} for more details.
\end{example}

Finally, in Section \ref{sec:top-interpretation}, we propose a third interpretation for the function $\J_\alpha$, or more specifically its numerator. In that section, we define (in type A) a smooth projective variety and we conjecture that the Poincaré polynomial of this variety is given by $(q)_\alpha^2\J_\alpha$. We give some evidence for this conjecture, notably by proving the $n=3$ case.

\begin{remark} \label{rmk:K-limit}
    Let $\alpha=\sum_i a_i \alpha_i$. It's easy to see from (\ref{eq:rewritten-recursion}) that $\J_\alpha$ has a pole of order at most $2|\alpha|$ at $q=1$, where $|\alpha|=\sum_i a_i$. We can therefore define
    \[ K_\alpha = \left. \left( \prod_{i=1}^n (1-q^{d_i})^{2a_i} \cdot \J_\alpha \right)\right|_{q=1} \in \QQ.\]
    Multiplying both sides of (\ref{eq:rewritten-recursion}) by $\prod_{i=1}^n (1-q^{d_i})^{2a_i}$ and taking the limit as $q\to 1$, we obtain the following recursion for $K_\alpha$\footnote{Note that the $q\to 1$ limit of the Toda recursion (\ref{eq:Toda-recursion}) yields the same recursion as the $q\to 1$ limit of the fermionic recursion (\ref{eq:rewritten-recursion}).}: 
    \begin{equation} \label{eq:K-recursion}
    \frac{(\alpha, \alpha)}{2} \cdot K_\alpha = \sum_{i=1}^n d_i K_{\alpha-\alpha_i}
    \end{equation}
    (as all the terms corresponding to $\beta<\alpha$ such that $\alpha-\beta$ is not a simple root vanish in the $q\to 1$ limit).
    
    Under the geometric interpretation of Section \ref{sec:geo-interpretation}, this $q\to 1$ specialization controls the leading term of the Hilbert function of the Zastava space, i.e.
     \[ \dim \CC[Z^\alpha]_n = K_\alpha \frac{n^{2|\alpha|-1}}{(2|\alpha|-1)!} + O(n^{2|\alpha|-2}).\]
    Under the representation-theoretic interpretation of Section \ref{sec:rt-interpretation}, it corresponds to taking Whittaker vectors for $\gf$ itself rather than its quantum group. Under the conjectural interpretation of Section \ref{sec:top-interpretation}, $K_\alpha$ is related to the Euler characteristic of the variety $X_\alpha$ defined in that section. Specifically, Conjecture \ref{conj:poincare-poly} would imply that \[ K_\alpha = \frac{\chi(X_\alpha)}{\left(\prod_{i=1}^na_i!\right)^2}.\] 
    This $q\to 1$ specialization of $\J_\alpha$ is already quite nontrivial and interesting. For example, it follows from Theorem \ref{thm:denominator} that the denominator of $K_\alpha$ divides $\left(\prod_{i=1}^na_i!\right)^2$, which is not at all clear from the recursion (\ref{eq:K-recursion}). Moreover, the formula (\ref{eq:main-formula}) specializes to the following formula for $K_\alpha$:
    \begin{equation} \label{eq:K-combi-formula}
    K_\alpha = \frac{1}{\left(\prod_{i=1}^na_i!\right)^2}\sum_{\underline{m}} \prod_{k=1}^{n-1}\prod_{i=1}^k \binom{m_{k+1,i}}{m_{k,i}} \binom{m_{k+1,i+1}}{m_{k,i}},
    \end{equation}
    with the sum indexed by the same set as in Theorem \ref{thm:combi-formula}. The formula (\ref{eq:K-combi-formula}) has already appeared before in the literature. In the context of hypergeometric series of partial flag varieties, it appears in \cite[Theorem 5.1.6]{BCKvS} (for the particular case of \emph{complete} flag varieties), and in the context of Whittaker functions it is a special case of a formula of Ishii and Stade \cite[Theorem 18]{IS}\footnote{Specifically, the $a=0$ case.}. Our Theorem \ref{thm:combi-formula} can therefore be seen as a $q$-analog of the formulas cited just above. See also \cite{OCon}, where a probabilistic interpretation of (\ref{eq:K-combi-formula}) is studied.

\end{remark}

\section{Geometric interpretation} \label{sec:geo-interpretation}

In this section, we assume for simplicity that $\gf$ is simply laced (i.e. $d_i=1$ for all $i$). Fix $\alpha=\sum a_i\alpha_i\in Q^\pos$.

Let $Z^\alpha$ be the Zastava space of degree $\alpha$ based quasimaps from $\PP^1$ to the flag variety of $\gf$ (whose definition can be found for example in \cite{FM}). The Zastava space $Z^\alpha$ is an affine variety of dimension $2|\alpha|$. It comes equipped with a $\CC^\times$ action given by loop rotation, which induces an $\NN$-grading on its coordinate ring $\CC[Z^\alpha]$. It also comes with a $\CC^\times$-equivariant flat morphism $Z^\alpha \to \bb{A}^\alpha$, where $\bb{A}^\alpha = \prod_{i=1}^n (\bb{A}^{a_i}/S_{a_i})$, called the \emph{factorization} map.
The following interpretation of $\J_\alpha$ is due to Givental and Lee \cite{GL} in type A and to Braverman and Finkelberg in this level of generality \cite{BF}. We give here an alternative simple proof based on the interpretation of $Z^\alpha$ as the Coulomb branch of a quiver gauge theory due to Braverman, Finkelberg and Nakajima \cite{BFN2} and the monopole formula for Coulomb branches.

\begin{theorem} \label{thm:geo-interpretation}
    The rational function $\J_\alpha$ is equal to the Hilbert series of the graded ring $\CC[Z_\alpha]$, i.e.
    \[ \J_\alpha(q) = \sum_{d\ge 0}\dim(\CC[Z^\alpha]_d) q^d,\]
    where $\CC[Z^\alpha]_d$ denotes the degree $d$ part of $\CC[Z^\alpha]$.
\end{theorem}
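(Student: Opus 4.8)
The plan is to compute the Hilbert series of $\CC[Z^\alpha]$ through the Coulomb-branch description of $Z^\alpha$ and then to recognize the answer as $\J_\alpha$ by checking that it obeys the defining recursion \eqref{eq:fermionic-recursion}. First I would invoke the theorem of Braverman--Finkelberg--Nakajima \cite{BFN2} identifying $Z^\alpha$ with the Coulomb branch of the $3d$ $\mathcal{N}=4$ quiver gauge theory supported on the Dynkin diagram of $\gf$, with gauge group $\prod_{i=1}^n\GL(a_i)$ and bifundamental matter along the edges (with the framing dictated by \cite{BFN2}). The two gradings must then be matched: the $\CC^\times$ loop-rotation grading on $\CC[Z^\alpha]$ should correspond, up to the standard normalization, to the conformal-dimension grading on the Coulomb branch, and the residual torus $T$ should correspond to the topological $U(1)^n$ symmetry, which is the geometric origin of the auxiliary variables $z_1,\dots,z_n$ of Remark \ref{rmk:multivariate}.

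With this dictionary in hand, the monopole formula expresses the Hilbert series as a sum over dominant magnetic charges $\ul m=(m^{(i)})_{i=1}^n$, where $m^{(i)}=(m^{(i)}_1\ge\cdots\ge m^{(i)}_{a_i})\in\ZZ^{a_i}$:
\[ \sum_{\ul m} q^{\Delta(\ul m)}\prod_{i=1}^n P_{a_i}(q;m^{(i)}),\qquad P_{a_i}(q;m^{(i)})=\prod_{B}\frac{1}{(q)_{|B|}},\]
the product running over the maximal constant blocks $B$ of $m^{(i)}$, and $\Delta(\ul m)$ being the conformal dimension, a signed sum of the quantities $\lvert m^{(i)}_a-m^{(j)}_b\rvert$ (positive contributions from the edges, negative from the adjoint vector multiplets). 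To keep this sum convergent I would retain the topological fugacities $z_i$ throughout, so that the computation produces the multivariate function $\J_\alpha(q,z)$ of Remark \ref{rmk:multivariate}, specializing to $z=1$ only at the very end.

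The core of the argument is to reorganize this sum into the shape of \eqref{eq:fermionic-recursion}. The natural strategy is to stratify the magnetic charges according to the size $a_i-b_i$ of the minimal (bottom) constant block of each $m^{(i)}$, setting $\beta=\sum_i b_i\alpha_i\le\alpha$. The unbroken residual gauge group $\prod_i\GL(a_i-b_i)$ on these bottom blocks should contribute precisely the factor $\prod_i\frac{1}{(q)_{a_i-b_i}}=\frac{1}{(q)_{\alpha-\beta}}$; the charges lying strictly above the bottom blocks should repackage, after a downward translation, into a magnetic charge for the rank-$\beta$ Dynkin quiver, thereby reproducing $\J_\beta$ (weighted by $z^\beta$ in the multivariate version); and the relative conformal dimension of this bottom layer should supply the prefactor $q^{(\beta,\beta)/2}$, with the quadratic form $\tfrac{(\beta,\beta)}{2}=\sum_i b_i^2-\sum_{\{i,j\}}b_ib_j$ emerging from the signed count of the $\lvert m^{(i)}_a-m^{(j)}_b\rvert$. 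Matching these three pieces term by term would turn the monopole sum into $\sum_{0\le\beta\le\alpha}\frac{q^{(\beta,\beta)/2}}{(q)_{\alpha-\beta}}\J_\beta$. Since the monopole formula gives $1=\J_0$ when $\alpha=0$ and the recursion \eqref{eq:rewritten-recursion} has a unique solution, this identifies the Hilbert series with $\J_\alpha$.

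I expect the delicate point to be exactly this reorganization, and in particular the conformal-dimension bookkeeping. The cross terms between the bottom blocks and the charges above them are a priori sensitive to the actual magnetic values and not only to $\beta$, so the crucial technical step is to verify that, once the geometric summation over the positions of the bottom blocks is carried out (with the fugacities $z_i$ absorbing the value-dependence), everything collapses to the clean combination $q^{(\beta,\beta)/2}/(q)_{\alpha-\beta}$. Making the infrared manipulations rigorous rather than merely formal---the sum does not converge at $z=1$, and there is a decoupled overall $\GL(1)$ to account for---is the other place where care would be required.
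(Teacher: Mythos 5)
Your overall strategy is exactly the paper's (identify $\CC[Z^\alpha]$ with a Coulomb-branch algebra via \cite{BFN2}, apply the monopole formula of \cite{BFN}, stratify the magnetic charges by their bottom blocks, and recognize the fermionic recursion \eqref{eq:fermionic-recursion}), but there is a genuine gap at the first step, and it propagates into the two difficulties you flag at the end. The quasimap space $Z^\alpha$ is \emph{not} the full Coulomb branch: $H^{G(\Oc)}_*(\mathcal{R})$, with charges ranging over all dominant tuples in $\ZZ^{a_i}$, is the coordinate ring of the \emph{open} Zastava space of based maps. What \cite[Corollary 3.4]{BFN2} provides for $Z^\alpha$ itself is the \emph{positive part} $H^{G(\Oc)}_*(\mathcal{R}^+)$, $\mathcal{R}^+=\pi^{-1}(\op{Gr}_G^+)$, so the monopole sum must be restricted to multipartitions (all entries $\ge 0$). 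Your fugacity regularization cannot substitute for this restriction: since the matter is purely bifundamental, your $\Delta(\ul m)$ depends only on differences of charges and is invariant under simultaneous translation of all of them (this is precisely your decoupled overall $\GL(1)$), so at $z=1$ every power of $q$ acquires infinite multiplicity, and no reorganization of the full-lattice series converges to the Hilbert series of $\CC[Z^\alpha]$. Note also that the multivariate $\J_\alpha(q,z)$ of Remark \ref{rmk:multivariate} is the $T\times\CC^\times$-character of the quasimap space, i.e.\ already of the \emph{positive} sum; it is not the full-lattice monopole series, so the intermediate object you propose to compute is not $\J_\alpha(q,z)$.

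Once the sum is restricted to $\op{Gr}_G^+$ (and the loop-rotation grading is matched using the correction of \cite[Remark 3.3]{BFN2}, which is a genuine modification of the homological grading, not just a normalization, and turns the signed sum of $\lvert m_a-m_b\rvert$'s into the formula for $d_\lam$ involving $\min(\lam^{(i_1)}_{j_1},\lam^{(i_2)}_{j_2})$), the delicate bookkeeping you worry about evaporates. One stratifies by the set of vanishing entries, i.e.\ by $\beta=\sum_i b_i\alpha_i$ with $b_i$ the number of nonzero parts of $\lam^{(i)}$: the zero blocks contribute exactly $1/(q)_{\alpha-\beta}$, and the map deleting the zeros and subtracting $1$ from each remaining part is a bijection onto the charges for $\beta$, under which $P_\lam$ changes only by that factor and $d_\lam$ drops by exactly $\frac{(\beta,\beta)}{2}$ --- the cross terms $\min(\cdot,\cdot)$ between a zero entry and anything vanish, and between two nonzero entries drop by exactly $1$, so the shift depends on $\beta$ alone, with no fugacities needed and no dependence on the actual magnetic values. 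This gives the recursion \eqref{eq:fermionic-recursion} for the restricted sum and hence the theorem. In short, your plan is the right one, but the missing ingredient --- the based-maps/quasimaps distinction, i.e.\ $\mathcal{R}$ versus $\mathcal{R}^+$ --- is precisely what resolves both the convergence problem and the value-dependence of the cross terms that you correctly identified as the danger points.
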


\begin{proof}
    Let $G=\prod_i \GL_{a_i}(\CC)$ and consider the $G$-representation \[N=\oplus_{(i,j)\in E}\Hom(\CC^{a_i}, \CC^{a_j}),\] where $E$ is the set of edges of the Dynkin diagram of $\Phi$. Let $\pi: \mathcal{R} \to \op{Gr}_G$ be the BFN space of triples associated to the pair $(G,N)$ as defined in \cite{BFN} and let $\mathcal{R}^+ = \pi^{-1}(\op{Gr}_G^+)$, where $\op{Gr}_G^+$ is the positive part of the affine Grassmannian of $G$. For $\Oc=\CC[\![z]\!]$, the $G(\Oc)$-orbits in $\op{Gr}_G$ are indexed by tuples $\lam=(\lam^{(1)}; \ldots; \lambda^{(n)})$, where $\lam^{(i)}=(\lam^{(i)}_1, \ldots, \lam^{(i)}_{a_i})$ is a weakly decreasing sequence of integers of length $a_i$. Then $\op{Gr}_G^+ \subseteq \op{Gr}_G$ is the union of $G(\Oc)$-orbits indexed by those $\lam$ that are \emph{multipartitions}, i.e. such that $\lam_j^{(i)}\ge 0$ for all $i,j$.
    
    By \cite[Corollary 3.4]{BFN2}, $\CC[Z_\alpha]$ is isomorphic to the Coulomb branch algebra $H_*^{G(\Oc)}(\mathcal{R}^+)$. Moreover, while the loop rotation grading $\CC[Z_\alpha]$ does not agree with the homological grading on $H_*^{G(\Oc)}(\mathcal{R}^+)$, they are closely related as explained in \cite[Remark 3.3]{BFN2}. Therefore, the monopole formula of \cite[Section 2(iii)]{BFN} (appropriately modified to take into account the correction of \cite[Remark 3.3]{BFN2} and restricted to positive $G(\Oc)$-orbits) yields
    \begin{equation} \label{eq:monopole}
        \sum_{d\ge 0}\dim(\CC[Z^\alpha]_d) q^d = \sum_{\lam \in \op{MPar}_\alpha} q^{d_\lam} P_\lam(q),
    \end{equation} 
    where $\op{MPar}_\alpha$ is the set of all multipartitions $\lam=(\lam^{(1)}, \ldots, \lam^{(r)})$ with $\lam^{(i)}= (\lam^{(1)}_1, \ldots, \lam^{(1)}_{a_i})$,
    \[P_\lam(q) = \prod_{i=1}^r \prod_{j\ge 0} \prod_{k=1}^{m_j(\lam^{(i)})} \frac{1}{1-q^k},\]
    for $m_j(\lam^{(i)})$ the multiplicity of $j$ in the partition $\lam^{(i)}$, and
    \[ d_\lam = \sum_{i=1}^r \sum_{j=1}^{a_i} (2j-1) \lam^{(i)}_j - \sum_{(i_1, i_2)\in E} \sum_{j_1=1}^{a_{i_1}} \sum_{j_2=1}^{a_{i_2}} \min(\lam^{(i_1)}_{j_1}, \lam^{(i_2)}_{j_2}).\]
    If we denote by $F_\alpha(q)$ the right-hand side of (\ref{eq:monopole}), we therefore just need to check that $F_
    \alpha$ satisfies the fermionic recursion defining $\J_\alpha$ (since clearly $F_0=1$). For this, we split the sum over $\lam$ into different sums depending on which $\lam_j^{(i)}$ vanish. For $\beta = \sum b_i\alpha_i\le \alpha$, let $\op{MPar}_{\alpha, \beta}$ be the set of multipartitions $\lam \in \op{MPar}_{\alpha}$ such that $\lam^{(i)}_j =0$ if and only if $j>b_i$. Then $\op{MPar}_{\alpha} =\bigsqcup_{0\le \beta \le \alpha} \op{MPar}_{\alpha, \beta}$ and there is a bijection $\sigma_{\alpha, \beta} : \op{MPar}_{\beta}\overset{\sim}{\to} \op{MPar}_{\alpha, \beta}$ sending a multipartition $\mu =(\mu^{(i)})_i$ to a multipartition $\lam =(\lam^{(i)})_i$ where
    \[ \lam^{(i)} = (\mu^{(i)}_1 + 1, \ldots, \mu^{(i)}_{b_i}+1, 0 \ldots, 0).\]
    It's easy to see that 
    \[ P_{\sigma_{\alpha, \beta}(\mu)}(q) = \frac{1}{(q)_{\alpha-\beta}} P_\mu(q)\]
    and 
    \[ d_{\sigma_{\alpha, \beta}(\mu)} = d_\mu + \sum_{i=1}^nb_i^2 - \sum_{(i_1,i_2)\in E} b_i b_j = d_\mu + \frac{(\beta, \beta)}{2}. \]
    Hence, we have
    \begin{align*}
        F_\alpha(q) &= \sum_{\lam \in \op{MPar}_\alpha} q^{d_\lam} P_\lam(q) \\
        &= \sum_{0\le \beta\le \alpha}\sum_{\lam \in \op{MPar}_{\alpha, \beta}} q^{d_\lam} P_\lam(q) \\
        &= \sum_{0\le \beta\le \alpha}\sum_{\mu \in \op{MPar}_{\beta}} q^{d_\mu+ \frac{(\beta,\beta)}{2}} \frac{P_\mu(q)}{(q)_{\alpha-\beta}} \\
        &= \sum_{0\le \beta\le \alpha}\frac{q^{\frac{(\beta,\beta)}{2}}}{(q)_{\alpha-\beta}} F_\beta(q).
    \end{align*} 
\end{proof}

\begin{remark} \label{rmk:central-fiber}
    Note that, since $\bb{A}^\alpha$ has Hilbert series $\frac{1}{(q)_\alpha}$, it follows from Theorem \ref{thm:geo-interpretation} that $(q)_\alpha \J_\alpha$ is the Hilbert series of the central fiber $Z^\alpha_0$ of the flat morphism $Z^\alpha\to \bb{A}^\alpha$. One can therefore recover $\J_\alpha$ from only this central fiber rather than the whole Zastava space. This central fiber is isomorphic to the intersection $S^-_0 \cap \ol{S_\alpha}$ of two opposite semi-infinite orbits in the affine Grassmannian of the adjoint group with Lie algebra $\gf$ \cite[6.4.2]{FM}.
\end{remark}

\begin{remark}
    The geometric interpretation presented in this section was extended to the non-simply laced case by Braverman and Finkelberg in \cite{BF3}. For this, the Zastava spaces need to be replaced by twisted Zastava spaces. 
\end{remark}

\section{Representation-theoretic interpretation} \label{sec:rt-interpretation}

In this section, we recall how $\J_\alpha$ arises from Whittaker functions for Verma modules over quantum groups, following closely the presentation in \cite{FFJMM}. These were first studied independently by Etingof \cite{Etingof} and Sevostyanov \cite{Sev2}, who both observed that the Whittaker functions satisfy a Toda recursion. The fermionic recursion (\ref{eq:fermionic-recursion}) in this context was proved in \cite{FFJMM}. See also \cite{DKT} for closely related formulas. All these references work with Verma modules of arbitrary highest weight (or equivalently, a universal Verma module), which leads to an interpretation of the multivariate functions from Remark \ref{rmk:multivariate}. For the specialization studied in this paper, the relevant Verma module is that of highest weight $-\rho$, where $\rho$ is the Weyl vector defined by $(\rho, \alpha_i)=d_i$ for all $i$.

Let $\mathbf{U}_v$ be the Drinfeld--Jimbo quantum group with parameter $v$ associated to the Lie algebra $\gf$, over the field $\KK=\QQ(v^{\frac{1}{N}}\mid N\in \NN^*)$. It is defined as the associative $\KK$-algebra generated by $E_i, F_i$ for $1\le i \le n$ and $K_\mu$ for $\mu \in \hf^*$ with relations 

\begin{gather}
    K_\mu K_\nu = K_{\mu+\nu}, \quad K_0=1, \\
    K_\mu E_i K_\mu^{-1} = v^{(\mu, \alpha_i)} E_i, \quad K_\mu F_i K_\mu^{-1} = v^{-(\mu, \alpha_i)} F_i, \\
    [E_i, F_j] = \delta_{ij} \frac{K_i-K_i^{-1}}{v-v^{-1}},
    \\
    \sum_{k=0}^{r} (-1)^k E_i^{(r-k)} E_j E_i^{(k)} = 0 \quad \text{for $i\ne j, r=1-a_{ij}$}, \label{eq:serre-rel}\\
    \sum_{k=0}^{r} (-1)^k F_i^{(r-k)} F_j F_i^{(k)} = 0 \quad \text{for $i\ne j, r=1-a_{ij}$},
\end{gather}
where 
\[ E_i^{(k)} = \frac{E_i^k}{[k]_{v^{d_i}}!}, \quad F_i^{(k)} = \frac{F_i^k}{[k]_{v^{d_i}}!}\]
and
\[ [k]_u!= \displaystyle\prod_{j=1}^k \left(\frac{u^{j}-u^{-j}}{u-u^{-1}}\right).
\footnote{The convention for $[k]_u!$ that we use here and in Section \ref{sec:denominator} is different to the convention for Gaussian binomial coefficients used in the introduction and in Sections \ref{sec:formula} and \ref{sec:top-interpretation}, as this is the standard convention in the context of quantum group.}\]

Similarly, we let $\mathbf{U}_{v^{-1}}$ be the quantum group with parameter $v^{-1}$, whose generators are denoted by $\overline{E}_i, \overline{F}_i, \overline{K}_\mu$. There is a $\KK$-linear anti-isomorphism $\omega : \mathbf{U}_v \to \mathbf{U}_{v^{-1}}$ defined by
\[\omega(E_i)=\overline{F}_i,\quad \omega(F_i)=\overline{E}_i,\quad \omega(K_\mu)=\overline{K}_{-\mu}.\]

Given $\lambda\in \hf^*$, the Verma module $\mathbf{V}^\lambda$ is the left $\mathbf{U}_v$-module generated by a single element $\one_\lambda$ with relations $E_i \one_\lam =0$ and $K_\mu \one_\lam = v^{(\mu, \lam)} \one_\lam$. We have a weight decomposition \[\mathbf{V}^\lam=\bigoplus_{\alpha\in Q^\pos}(\mathbf{V}^\lam)_\alpha,\]
where $(\mathbf{V}^\lam)_\alpha=\{x\in \mathbf{V}^\lam : \forall \mu \in \hf^*, K_\mu x = v^{(\mu, \lam-\alpha)} x\}$ is the weight space of weight $\lam-\alpha$. Let
\[\widehat{\mathbf{V}^\lam}=\prod_{\alpha\in Q^\pos}(\mathbf{V}^\lam)_\alpha\]
be the completion of $\mathbf{V}^\lam$ with respect to the weight grading. Similarly, $\ol{\mathbf{V}}^\lam$ is the Verma module for $\mathbf{U}_{v^{-1}}$ of highest weight $\lam$, which is generated by $\ol{\one}_\lam$ with relations $\ol{E}_i\ol{\one}_\lam =0$ and $\ol{K}_\mu\ol{\one}_\lam =v^{-(\mu, \lam)}\ol{\one}_\lam$ and has a similar weight decomposition. There is a unique $\KK$-bilinear form $(\cdot, \cdot):\mathbf{V}^\lam \times \ol{\mathbf{V}}^\lam \to \KK$, satisfying $(\one_\lam, \ol{\one}_\lam) = 1$ and $(rx, y)=(x,\omega(r)y)$ for all $r\in \mathbf{U}_v$, which is called the Shapovalov form.

Choose an orientation of the Dynkin diagram associated to the root system, which we encode by matrix $(\e_{ij})_{i=1}^n$, where
\[ \e_{ij} =\begin{cases}
    1 &\text{if there is an edge oriented from $i$ to $j$}. \\
    -1 &\text{if there is an edge oriented from $j$ to $i$}. \\
    0 &\text{if there is no edge from $i$ to $j$}.
    \end{cases} \]
Pick also a tuple of weights $(\nu_i)_{i=1}^n \in (\hf^*)^n$ satisfying
\[ (\nu_i, \alpha_j) - (\nu_j, \alpha_i) = \e_{ij} (\alpha_i, \alpha_j).\footnote{While not unique, such a tuple always exists. For example, after having fixed an ordering of the simple roots, we can take $\nu_i = \sum_{k=0}^{i-1} \omega_k  \e_{ik} a_{ki}$, where $\omega_k$ is the fundamental weight, defined by $(\omega_k, \alpha_j) = \delta_{kj}d_k$.}\]
There exists a unique vector $\theta^\lam = \sum_{\alpha\in Q^\pos} \theta^\lam_\alpha \in \widehat{\mathbf{V}^\lam}$, with $\theta^\lam_\alpha\in \mathbf{V}^\lam_\alpha$, satisfying $\theta^\lam_0 = \one^\lam$ and
\[ E_i K_{\nu_i}\theta^\lam = \theta^\lam\]
for every $i$, called the \emph{Whittaker vector} (relative to the choice of orientation and tuple $(\nu_i)$)\footnote{In the classical setting of Verma modules for semisimple Lie algebras, Whittaker vectors are simply eigenvectors for the Chevalley generators $e_i$. Such a definition cannot carry verbatim to the setting of quantum groups, since it follows from the quantum Serre relation (\ref{eq:serre-rel}) the algebra generated by the $E_i$ admits no nondegenerate character (see \cite{Sev1}). This is why we need to introduce the $K_{\nu_i}$ term.}. Similarly, let $\bar{\theta}^\lam \in \widehat{\ol{\mathbf{V}}^\lam}$ be the unique vector satisfying $\bar{\theta}^\lam_0 = \ol{\one}^\lam$ and
\[ \ol{E}_i \ol{K}_{\nu_i}\bar{\theta}^\lam = \bar{\theta}^\lam\]
for every $i$. 

Up to a simple factor, $\J_\alpha$ can be recovered as the Shapovalov pairing of $\theta^\lam_\alpha$ with $\bar{\theta}^\lam_\alpha$ for $\lam=-\rho$ \cite[Theorem 3.1]{FFJMM}:

\begin{theorem} \label{thm:rt-interpretation}
    For every $\alpha \in Q^\pos$,
    \begin{equation} \label{eq:rt-interpretation}
        \J_\alpha |_{q=v^2} = v^{-\frac{(\alpha, \alpha)}{2}-(\rho,\alpha)} \prod_i \frac{1}{\left((1-v^{2d_i})(1-v^{-2d_i})\right)^{a_i}} (\theta^{-\rho}_\alpha,\bar{\theta}^{-\rho}_\alpha).
    \end{equation}
\end{theorem}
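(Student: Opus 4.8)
The plan is to show that the right-hand side of (\ref{eq:rt-interpretation}), regarded as a function of $\alpha$, satisfies the fermionic recursion (\ref{eq:fermionic-recursion}) and takes the value $1$ at $\alpha=0$; by the uniqueness noted after Definition~\ref{def:J-function} this forces it to equal $\J_\alpha|_{q=v^2}$. Equivalently, the statement is the specialization to $\lambda=-\rho$ of the identity of \cite{FFJMM} valid for arbitrary highest weight $\lambda$, which produces the multivariate $\J_\alpha(q,z)$ of Remark~\ref{rmk:multivariate}. I would work directly with the Shapovalov pairings $\Phi^\lambda_\alpha := (\theta^\lambda_\alpha,\bar\theta^\lambda_\alpha)$ and derive their recursion. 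Since $\Phi^\lambda_\alpha$ is a Laurent-polynomial function of the highest-weight parameters $v^{(\lambda,\alpha_i)}$, it suffices to establish everything for generic $\lambda$ and only specialize to $\lambda=-\rho$ at the end, which avoids any degeneracy of the Shapovalov form at $-\rho$.

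First I would record the componentwise Whittaker relations. Extracting the weight-$(\lambda-\alpha+\alpha_i)$ part of $E_iK_{\nu_i}\theta^\lambda=\theta^\lambda$ gives $E_i\theta^\lambda_\alpha = v^{-(\nu_i,\lambda-\alpha)}\theta^\lambda_{\alpha-\alpha_i}$, and likewise $\ol{E}_i\bar\theta^\lambda_\alpha = v^{(\nu_i,\lambda-\alpha)}\bar\theta^\lambda_{\alpha-\alpha_i}$ for the bar vector. These determine each $\theta^\lambda_\alpha$ from its lower components, since for generic $\lambda$ the map $x\mapsto(E_ix)_i$ is injective on the weight spaces of a Verma module. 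To compute $\Phi^\lambda_\alpha$ I would expand $\theta^\lambda_\alpha$ in a PBW basis of divided-power monomials in the $F_i$: the relations above solve for the coefficients as products of the scalars $v^{\pm(\nu_i,\lambda-\cdot)}$ and quantum integers, and the Shapovalov adjointness $(rx,y)=(x,\omega(r)y)$ with $\omega(F_i)=\ol{E}_i$ rewrites the norm of $\theta^\lambda_\alpha$ as a sum over $0\le\beta\le\alpha$ of the lower pairings $\Phi^\lambda_\beta$. The geometric series coming from repeated applications of each $F_i$ between levels $\beta$ and $\alpha$ collapse to the factorial denominators $(q)_{\alpha-\beta}$ with $q=v^2$, while the accumulated eigenvalue products contribute a monomial in $v$ and in the highest-weight data; this is the computation carried out in \cite[Theorem 3.1]{FFJMM}, yielding the multivariate recursion of Remark~\ref{rmk:multivariate}.

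It then remains to specialize $\lambda=-\rho$ and track the prefactor. The highest-weight dependence enters only through $\lambda+\rho$: the multivariate parameters $z_i$ are, up to the overall $v$-power, monomials in $v^{(\lambda+\rho,\cdot)}$, so at $\lambda=-\rho$ they all equal $1$ and the recursion for $\Phi^{-\rho}_\alpha$ reduces to (\ref{eq:fermionic-recursion}) up to normalization. Finally I would reconcile the symmetric quantum-integer normalization $[k]_{v^{d_i}}!$ built into $\mathbf{U}_v$ with the one-sided products $\prod_j(1-q^{d_ij})$ defining $(q)_\alpha$: the discrepancy is a power of $v$ together with one factor $(v^{d_i}-v^{-d_i})^{-2}=-\big((1-v^{2d_i})(1-v^{-2d_i})\big)^{-1}$ per unit of $\alpha$ in direction $i$ (one from $\theta$ and one from $\bar\theta$). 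Collecting this with the powers of $v$ from the $q^{(\beta,\beta)/2}$ terms and from $(\rho,\alpha_i)=d_i$ produces exactly $v^{-(\alpha,\alpha)/2-(\rho,\alpha)}\prod_i\big((1-v^{2d_i})(1-v^{-2d_i})\big)^{-a_i}$. Since $\Phi^{-\rho}_0=(\one_{-\rho},\ol{\one}_{-\rho})=1$ and this prefactor is $1$ at $\alpha=0$, the normalized pairings satisfy (\ref{eq:fermionic-recursion}) with the correct initial value and hence coincide with $\J_\alpha|_{q=v^2}$.

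The \emph{main obstacle} is the Shapovalov-norm computation of the second paragraph: solving for the PBW coefficients of $\theta^\lambda_\alpha$ from the eigenvalue relations and resumming so that the denominators come out to be precisely $(q)_{\alpha-\beta}$ requires delicate bookkeeping, and it is here that the specific role of the twist $(\nu_i)$ must be used and that the value $\lambda=-\rho$ is seen to trivialize the $z$-dependence (and, as in Remark~\ref{rmk:palindromic}, to make the palindromy of the numerator manifest). Conveniently, once the recursion and the prefactor are matched, the uniqueness of the solution of (\ref{eq:fermionic-recursion}) means no further identities need to be verified.
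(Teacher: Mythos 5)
Your proposal is correct and matches the paper's approach: the paper offers no independent proof of this theorem, but states it as a direct citation of \cite[Theorem 3.1]{FFJMM} (specialized to $\lambda=-\rho$), which is precisely the Whittaker-pairing recursion argument you reconstruct, including the componentwise relations $E_i\theta^\lambda_\alpha = v^{-(\nu_i,\lambda-\alpha)}\theta^\lambda_{\alpha-\alpha_i}$ and the observation that the $z$-dependence enters only through $\lambda+\rho$ and hence trivializes at $\lambda=-\rho$. The only loose end is the sign bookkeeping in $(v^{d_i}-v^{-d_i})^{-2}=-\bigl((1-v^{2d_i})(1-v^{-2d_i})\bigr)^{-1}$, whose accumulated $(-1)^{|\alpha|}$ must cancel against powers of $v$ pulled out elsewhere, but this falls within the delicate bookkeeping you explicitly flag and defer to the cited computation.
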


\begin{remark}
    It is not hard to see that the right-hand side of (\ref{eq:rt-interpretation}) is independent of the choice of $(\nu_i)$, as there are simple rules for how the Whittaker vectors transform under a modification of the $(\nu_i)$ \cite{FFJMM}. A more surprising consequence of Theorem \ref{thm:rt-interpretation} is that the right-hand side is also independent of the choice of orientation. There does not seem to be a known direct proof of this, other than showing that the right-hand side satisfies the same recursion for any orientation. 
\end{remark}

From this interpretation of $\J_\alpha$, we can easily see that $\J_\alpha$ satisfies a symmetry property with respect to $q\to q^{-1}$ (which is not obvious from the defining recursion (\ref{eq:fermionic-recursion})).

\begin{corollary} \label{cor:symmetry}
    For every $\alpha \in Q^\pos$,
    \[ \J_\alpha(q^{-1}) = q^{\frac{(\alpha, \alpha)}{2}+(\rho,\alpha)} \J_\alpha(q).\]
\end{corollary}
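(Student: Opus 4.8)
The plan is to deduce the symmetry directly from the representation-theoretic interpretation of Theorem \ref{thm:rt-interpretation} by exploiting the field automorphism that sends $v \mapsto v^{-1}$. Let $\phi$ be the automorphism of $\KK = \QQ(v^{1/N}\mid N\in\NN^*)$ determined by $v^{1/N}\mapsto v^{-1/N}$. Since $\J_\alpha \in \QQ(q) = \QQ(v^2)$ and $\phi$ sends $q = v^2$ to $v^{-2} = q^{-1}$, applying $\phi$ to the scalar identity (\ref{eq:rt-interpretation}) turns its left-hand side into $\J_\alpha(q^{-1})$. I would then track how the right-hand side transforms.

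The key observation is that $\phi$ intertwines the two quantum groups: it carries $\mathbf{U}_v$ to $\mathbf{U}_{v^{-1}}$ by fixing the generator symbols $E_i, F_i, K_\mu$ (reinterpreted as $\ol E_i, \ol F_i, \ol K_\mu$) while acting on scalars, and hence carries $\mathbf{V}^{-\rho}$ to $\ol{\mathbf{V}}^{-\rho}$. Because the orientation matrix $(\e_{ij})$ and the tuple $(\nu_i)$ involve no $v$, the defining equation $E_i K_{\nu_i}\theta^{-\rho} = \theta^{-\rho}$ is sent by $\phi$ to $\ol E_i \ol K_{\nu_i}\phi(\theta^{-\rho}) = \phi(\theta^{-\rho})$ with $\phi(\theta^{-\rho})_0 = \ol\one_{-\rho}$; by the uniqueness of the Whittaker vector this forces $\phi(\theta^{-\rho}) = \bar\theta^{-\rho}$, and symmetrically $\phi(\bar\theta^{-\rho}) = \theta^{-\rho}$. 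Therefore $\phi$ sends the scalar $(\theta^{-\rho}_\alpha, \bar\theta^{-\rho}_\alpha)$ to the pairing $\langle \bar\theta^{-\rho}_\alpha, \theta^{-\rho}_\alpha\rangle$, where $\langle\cdot,\cdot\rangle : \ol{\mathbf{V}}^{-\rho}\times \mathbf{V}^{-\rho}\to \KK$ is the Shapovalov form of $\mathbf{U}_{v^{-1}}$.

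The one point requiring an argument is the symmetry $\langle \bar\theta^{-\rho}_\alpha, \theta^{-\rho}_\alpha\rangle = (\theta^{-\rho}_\alpha, \bar\theta^{-\rho}_\alpha)$. This follows from the uniqueness characterization of the Shapovalov form: setting $\langle y, x\rangle := (x,y)$, one checks it is normalized, $\langle \ol\one_{-\rho},\one_{-\rho}\rangle = 1$, and contravariant for $\mathbf{U}_{v^{-1}}$, using that $\omega : \mathbf{U}_v \to \mathbf{U}_{v^{-1}}$ and its inverse anti-isomorphism $\ol\omega$ satisfy $\omega\circ\ol\omega = \op{id}$. Indeed, for $\bar r \in \mathbf{U}_{v^{-1}}$ the contravariance $(rx,y)=(x,\omega(r)y)$ applied with $r = \ol\omega(\bar r)$ gives $\langle \bar r\, y, x\rangle = (x, \bar r\, y) = (\ol\omega(\bar r)x, y) = \langle y, \ol\omega(\bar r)x\rangle$. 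By the uniqueness of the form, $\langle\cdot,\cdot\rangle$ must be the Shapovalov form of $\mathbf{U}_{v^{-1}}$, which yields the claimed equality.

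Finally I would assemble the transformed identity. Applying $\phi$ to (\ref{eq:rt-interpretation}), the prefactor $v^{-(\alpha,\alpha)/2-(\rho,\alpha)}$ becomes $v^{(\alpha,\alpha)/2+(\rho,\alpha)}$, the product $\prod_i((1-v^{2d_i})(1-v^{-2d_i}))^{-a_i}$ is fixed (the two factors are merely interchanged), and the pairing is fixed by the symmetry just established. Dividing the resulting expression for $\J_\alpha(q^{-1})$ by the original expression for $\J_\alpha(q)$ cancels the product and the pairing, leaving $\J_\alpha(q^{-1})/\J_\alpha(q) = v^{(\alpha,\alpha)+2(\rho,\alpha)} = q^{(\alpha,\alpha)/2+(\rho,\alpha)}$, as desired. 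The only genuinely nontrivial step is the symmetry of the Shapovalov pairing together with the identification of $\phi(\theta^{-\rho})$ as $\bar\theta^{-\rho}$; everything else is bookkeeping of scalar factors.
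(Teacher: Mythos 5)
Your proposal is correct and takes essentially the same approach as the paper: the paper's proof of Corollary \ref{cor:symmetry} simply observes that ``since $v$ and $v^{-1}$ play a symmetric role,'' the quantity $\prod_i \left((1-v^{2d_i})(1-v^{-2d_i})\right)^{-a_i} (\theta^{-\rho}_\alpha,\bar{\theta}^{-\rho}_\alpha)$ is invariant under $v \mapsto v^{-1}$, and then applies Theorem \ref{thm:rt-interpretation}. What you have done is supply, correctly, the details the paper leaves implicit in that one-line symmetry claim --- the semilinear field automorphism exchanging $\mathbf{U}_v$ and $\mathbf{U}_{v^{-1}}$, the identification $\phi(\theta^{-\rho})=\bar{\theta}^{-\rho}$ via uniqueness of Whittaker vectors, and the symmetry of the Shapovalov pairing via its uniqueness characterization.
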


\begin{proof}
    Since $v$ and $v^{-1}$ play a symmetric role, it's clear that the term \[\prod_i \frac{1}{\left((1-v^{2d_i})(1-v^{-2d_i})\right)^{a_i}} (\theta^{-\rho}_\alpha,\bar{\theta}^{-\rho}_\alpha)\] is invariant under $v \to v^{-1}$. The result then follows immediately from Theorem \ref{thm:rt-interpretation}.
\end{proof}

\section{Denominator} \label{sec:denominator}

In this section, we use the representation-theoretic interpretation from Section \ref{sec:rt-interpretation} to prove Theorem \ref{thm:denominator}. This is really a special property of the specialization $\J_\alpha$ studied in this paper and does not extend to the multivariate enhancement mentioned in Remark \ref{rmk:multivariate}. Indeed, it is crucial that we work specifically with the Verma module of highest weight $-\rho$. The key input is that, for this particular Verma module,  the Shapovalov form is a perfect pairing on Lusztig's integral form.

Let $A=\ZZ[v^{\pm \frac{1}{N}}\mid N\in \NN^*]$ and let $\mathcal{U}_v$ be Lusztig's integral form of $\mathbf{U}_v$, which is the $A$-subalgebra of $\mathbf{U}_v$ generated by the $K_\mu$'s and the divided powers $E_i^{(k)}$, $F_i^{(k)}$ \cite{Lusztig}. Let $\mathcal{V}^\lam=\mathcal{U}_v\cdot \one^\lam$ be the corresponding integral form of the Verma module $\mathbf{V}^\lam$ and $\mathcal{V}^\lam_\alpha=\mathcal{V}^\lam\cap \mathbf{V}^\lam_\alpha$. Given a choice of reduced word for the longest word in the Weyl group, one can construct elements $F_{\beta} \in \mathcal{U}_v$ for every $\beta\in \Phi_+$ and an ordering $\beta_1, \ldots, \beta_N$ of the positive roots, so that that the divided power monomials
\[ F^{(\vec{k})} \one^\lam := F_{\beta_1}^{(k_{\beta_1})} \cdots F_{\beta_N}^{(k_{\beta_N})} \one^\lam \]
form an $A$-basis of $\mathcal{V}^\lam_\alpha$ as $\vec{k}=(k_\beta)_{\beta\in \Phi_+}$ runs over Kostant partitions of $\alpha$ (i.e. $\vec{k}=(k_\beta)_{\beta\in \Phi_+}\in \NN^{\Phi_+}$ and $\sum_{\beta\in \Phi_+} k_\beta \beta = \alpha$) \cite[Theorem 6.7]{Lusztig}. Here \[F_\beta^{(k)}=\frac{F_\beta^k}{[k]_{v^{d_\beta}}!},\] for $d_\beta=\frac{(\beta,\beta)}{2}$. Define $\mathcal{U}_{v^{-1}}$, $\ol{\mathcal{V}}^\lam$, $\ol{\mathcal{V}}^\lam_\alpha$ and $\ol{F}^{(\vec{k})} \ol{\one}^\lam$ analogously.

Note that the Shapovalov form restricted to $\mathcal{V}^\lam \times \ol{\mathcal{V}}^\lam$ takes values in $A$, since $(x \one^\lam, y \ol{\one}^\lam)=(\one^\lam,\omega(x)y\ol{\one}^\lam)$ for every $x\in \mathcal{U}_v$, $y\in \mathcal{U}_{v^{-1}}$ and the right-hand side is the coefficient of $\ol{\one}$ in $\omega(x)y\ol{\one}\in \ol{\mathcal{V}}^\lam$, which is an element of $A$. In the case $\lam=-\rho$, it turns out that we have a perfect pairing over $A$.

\begin{proposition} \label{prop:perfect-pairing}
    For every $\alpha \in Q^\pos$, the Shapovalov form $\mathcal{V}^{-\rho}_\alpha \times \ol{\mathcal{V}}^{-\rho}_\alpha \to A$ is a perfect pairing.
\end{proposition}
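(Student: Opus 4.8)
The plan is to reduce everything to the statement that the determinant of the Gram matrix of the Shapovalov form, computed in the divided-power bases $\{F^{(\vec k)}\one^{-\rho}\}$ of $\mathcal{V}^{-\rho}_\alpha$ and $\{\ol{F}^{(\vec k)}\ol{\one}^{-\rho}\}$ of $\ol{\mathcal{V}}^{-\rho}_\alpha$, is a unit in $A$. Both modules are free over $A$ of rank $P(\alpha)$, the number of Kostant partitions of $\alpha$, and the form takes values in $A$ on these bases, so perfectness is equivalent to this determinant lying in $A^\times$. Since $A=\ZZ[v^{\pm\frac{1}{N}}\mid N\in\NN^*]$, its units are exactly the monomials $\pm v^{r}$ with $r\in\QQ$; thus the goal is to show the Gram determinant at $\lambda=-\rho$ is $\pm v^{r}$.

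First I would isolate the mechanism in rank one. For $\gf=\slf_2$ the weight space of weight $\lambda-n\alpha$ is one-dimensional, and computing $E^nF^n\one^\lambda$ shows that the divided-power Shapovalov form there equals the Gaussian binomial $\qbinom{(\lambda+\rho,\alpha^\vee)-1}{n}_{v^{d}}$, where $\alpha^\vee=2\alpha/(\alpha,\alpha)$. At $\lambda=-\rho$ we have $(\lambda+\rho,\alpha^\vee)=0$, so this specializes to $\qbinom{-1}{n}_{v^{d}}=\prod_{j=1}^n\frac{[-j]_{v^{d}}}{[j]_{v^{d}}}=(-1)^n$, a unit. This is the whole point of the weight $-\rho$: it forces $(\lambda+\rho,\beta^\vee)=0$ for \emph{every} root $\beta$, turning the quantities that govern the Shapovalov form into units rather than genuine quantum integers.

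For the general case I would invoke the quantum Shapovalov determinant formula. In a PBW basis $\{\prod_\beta F_\beta^{k_\beta}\one^\lambda\}$ the Gram determinant equals a $\lambda$-independent normalization times $\prod_{\beta\in\Phi_+}\prod_{m\ge1}[(\lambda+\rho,\beta^\vee)-m]_{v^{d_\beta}}^{P(\alpha-m\beta)}$. At $\lambda=-\rho$ the $\lambda$-dependent product becomes $\prod_{\beta}\prod_m(-[m]_{v^{d_\beta}})^{P(\alpha-m\beta)}$. Passing from the PBW basis to the divided-power basis divides the determinant by $\big(\prod_{\vec k}\prod_\beta[k_\beta]_{v^{d_\beta}}!\big)^2$, and the bijection obtained by removing $m$ copies of $\beta$, between Kostant partitions $\vec k$ of $\alpha$ with $k_\beta\ge m$ and Kostant partitions of $\alpha-m\beta$, gives $\prod_{\vec k}\prod_\beta[k_\beta]_{v^{d_\beta}}!=\prod_\beta\prod_m[m]_{v^{d_\beta}}^{P(\alpha-m\beta)}$. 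The quantum integers $[m]_{v^{d_\beta}}$ then cancel root by root, matching the rank-one computation, and once the normalization constant is taken into account (see below) the determinant is a unit $\pm v^{r}$. Nonvanishing is automatic here, consistent with the fact that $\mathbf{V}^{-\rho}$ is irreducible because $(\lambda+\rho,\beta^\vee)=0\notin\ZZ_{>0}$.

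The main obstacle is exactly this $\lambda$-independent normalization constant: the PBW Shapovalov formula pins the determinant down only up to such a factor, whereas integrality requires knowing it agrees, up to a unit, with $\prod_{\vec k}\prod_\beta[k_\beta]_{v^{d_\beta}}!$, so that precisely one power of each $[m]_{v^{d_\beta}}$ survives to be cancelled. The cleanest way to control this is to avoid the PBW basis and instead prove directly that the divided-power Gram determinant is, up to $\pm v^{r}$, a product of Gaussian binomials $\qbinom{(\lambda+\rho,\beta^\vee)-1}{\,\cdot\,}_{v^{d_\beta}}$, by peeling off one simple reflection at a time along a reduced word for the longest element and reducing each step to the rank-one $\slf_2$ computation of the second paragraph. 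Granting that reduction, specialization at $\lambda=-\rho$ turns every binomial into $\qbinom{-1}{k}_{v^{d_\beta}}=(-1)^k$, and the determinant is manifestly a unit, establishing the proposition.
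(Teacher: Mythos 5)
Your reduction is the right one, and most of your bookkeeping coincides with the paper's proof: perfectness is equivalent to the divided-power Gram determinant lying in $A^\times=\{\pm v^r : r\in\QQ\}$; the rank-one mechanism at $\lambda=-\rho$ is exactly $\qbinom{-1}{n}_{v^{d}}=(-1)^n$; the change from the monomial basis to the divided-power basis is diagonal and divides the determinant by $\bigl(\prod_{\vec{k}\in\op{KP}(\alpha)}\prod_{\beta}[k_\beta]_{v^{d_\beta}}!\bigr)^2$; and your bijection identifying $\prod_{\vec{k}}\prod_{\beta}[k_\beta]_{v^{d_\beta}}!$ with $\prod_{\beta}\prod_{m}[m]_{v^{d_\beta}}^{|\op{KP}(\alpha-m\beta)|}$ is the same one the paper uses. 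The gap is in how you close the argument. You correctly notice that if the Shapovalov determinant is known only up to a $\lambda$-independent constant, the $[m]$'s do not cancel (a factor $\prod_\beta\prod_m[m]_{v^{d_\beta}}^{-|\op{KP}(\alpha-m\beta)|}$ survives), but your proposed repair --- proving directly that the divided-power Gram determinant is a product of Gaussian binomials ``by peeling off one simple reflection at a time along a reduced word and reducing each step to the rank-one computation'' --- is only asserted, and it is not a routine step. The Gram matrix does not become block-diagonal (or block-triangular with rank-one blocks) along the PBW stratification attached to a reduced word: cross-terms between distinct Kostant partitions are genuinely nonzero, and carrying out such an induction amounts to reproving the quantum Shapovalov determinant theorem \emph{with} its normalization, which is precisely the nontrivial content here. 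Writing ``granting that reduction'' leaves the proof conditional at its only hard point.

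The fix is that the normalization constant you worry about is not actually unknown: the De Concini--Kac determinant formula (Proposition 1.9 of their paper, which the paper cites) is exact in the monomial basis, and at $\lambda=-\rho$ it specializes to
\[\pm \prod_{\beta\in \Phi_+} \prod_{m\in \NN} \left(\frac{v^{md_\beta}-v^{-md_\beta}}{v^{d_\beta}-v^{-d_\beta}}\right)^{2|\op{KP}(\alpha-m\beta)|},\]
with exponent $2$ because each root contributes both the normalization factor $[m]_{v^{d_\beta}}$ and the specialized $\lambda$-factor $[-m]_{v^{d_\beta}}=-[m]_{v^{d_\beta}}$ --- exactly the squaring your rank-one computation predicts. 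This is, up to sign, the square of the diagonal change-of-basis determinant $\prod_{\vec{k}}\prod_\beta[k_\beta]_{v^{d_\beta}}!$, so the divided-power determinant is $\pm 1$. Replacing your sketched reduced-word induction by this citation turns your argument into the paper's proof essentially verbatim.
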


\begin{proof}
    This is equivalent to the determinant of the Shapovalov pairing with respect to the bases $(F^{(\vec{k})} \one^{-\rho})_{\vec{k}}$ and $(\ol{F}^{(\vec{k})} \ol{\one}^{-\rho})_{\vec{k}}$ being a unit in $A$. This determinant was calculated by De Concini and Kac for any $\lam$ \cite[Proposition 1.9]{DK}, generalizing a formula of Shapovalov \cite{Sha} for Verma modules over Lie algebras. De Concini and Kac work with the basis of monomials $F_{\beta_1}^{k_{\beta_1}} \cdots F_{\beta_N}^{k_{\beta_1}} \one^\lam$, for which their formula yields (when specialized to $\lam=-\rho$) a determinant of
    \[ \pm \prod_{\beta\in \Phi_+} \prod_{m\in \NN} \left(\frac{v^{md_\beta}-v^{-md_\beta}}{v^{d_\beta}-v^{-d_\beta}}\right)^{2|\mathrm{KP}(\alpha-m\beta)|},\]
    where $\op{KP}(\alpha)$ is the set of Kostant partitions of $\alpha$. This can be rewritten as
    \begin{align*}
        &\pm \prod_{\beta\in \Phi_+} \prod_{\vec{k} \in \op{KP}(\alpha)} \prod_{m=1}^{k_\beta} \left(\frac{v^{md_\beta}-v^{-md_\beta}}{v^{d_\beta}-v^{-d_\beta}}\right)^2 \\
        = &\pm \left(\prod_{\vec{k} \in \op{KP}(\alpha)} \prod_{\beta\in \Phi_+} [k_\beta]_{v_{\beta}}!\right)^2
    \end{align*}
    since there is a bijection between pairs of a natural number $m$ and a Kostant partition of $\alpha-m\beta$ and pairs of a Kostant partition $\vec{k}$ of $\alpha$ and a natural number $m \le k_\beta$ (given by increasing $k_\beta$ by $m$). But this is (up to a sign) exactly the square of the determinant of the change of basis matrix between De Concini-Kac's basis and the divided power basis, so the determinant of the Shapovalov form in the divided power basis is $\pm 1$.
\end{proof}

With this proposition established, we are now ready to prove Theorem \ref{thm:denominator}.

\begin{proof}[Proof of Theorem \ref{thm:denominator}]
    We will prove by induction on $\alpha$ that
    \[ \theta_\alpha^{-\rho} \in \frac{1}{[\alpha]_v!}\mathcal{V}_\alpha^{-\rho},\]
    where 
    \[[\alpha]_v!= \prod_{i=1}^n [a_i]_{v^{d_i}}! =\frac{v^{-\sum_i \binom{a_i}{2}}}{\prod_{i=1}^n(1-v^{2d_i})^{a_i}} \left. (q)_\alpha\right|_{q=v^2}\]
    if $\alpha=\sum_{i=1}^na_i\alpha_i$.
    The base case $\alpha=0$ is clear since $\theta_0^{-\rho}=\one^{-\rho}$. If $\alpha>0$, then $\ol{\mathcal{V}}_\alpha^{-\rho}$ is spanned (over $A$) by vectors of the form $\ol{F}_i^{(k)}x$ for some $k>0$, $1\le i \le n$ and $x \in \ol{\mathcal{V}}_{\alpha-k\alpha_i}^{-\rho}$. We have
    \begin{align*}
        (\theta^{-\rho}_\alpha,\ol{F}_i^{(k)}x) &= (E_i^{(k)}\theta^{-\rho}_\alpha,x) \\
        &= \frac{1}{[k]_{v_i}!}(E_i^{k}\theta^{-\rho}_\alpha,x) \\
        &= \frac{v^r}{[k]_{v_i}!}(\theta^{-\rho}_{\alpha-k\alpha_i},x) \\
       &\in \frac{1}{[k]_{v_i}! [\alpha-k\alpha_i]_v!} A \subseteq \frac{1}{[\alpha]_v!}A
    \end{align*}
    for some $r\in \QQ$ (which depends on the data $(\nu_i)$), where we used the induction hypothesis and the fact that $[k]_{v_i}![a_i-k]_{v_i}!$ divides $[a_i]_{v_i}!$ in $A$ on the last line. This shows that
    \[ (\theta^{-\rho}_\alpha,\ol{\mathcal{V}}_\alpha^{-\rho}) \subseteq \frac{1}{[\alpha]_v!}A, \]
    so $\theta^{-\rho}_\alpha \in \frac{1}{[\alpha]_v!} \mathcal{V}_\alpha^{-\rho}$
    by Proposition \ref{prop:perfect-pairing}. Similarly, we have $\bar{\theta}^{-\rho}_\alpha \in \frac{1}{[\alpha]_v!} \ol{\mathcal{V}}_\alpha^{-\rho}$, so
    \[ (\theta^{-\rho}_\alpha,\bar{\theta}^{-\rho}_\alpha) \in \frac{1}{\left([\alpha]_v!\right)^2} A.\]
    Setting $q=v^2$ and applying Theorem \ref{thm:rt-interpretation}, we get that
    \[ \J_\alpha \in \frac{1}{(q)_\alpha^2} \ZZ[q^{\pm 1}]\]
    (since $A\cap \QQ(v^2) = \ZZ[v^{\pm2}]$). But it's easy to see from e.g. (\ref{eq:rewritten-recursion}) that $\J_\alpha$ has no pole at $q=0$, so in fact 
    \[ \J_\alpha \in \frac{1}{(q)_\alpha^2} \ZZ[q].\]
    
\end{proof}

\begin{remark} \label{rmk:palindromic}
    For $\alpha = \displaystyle\sum_{i=1}^n a_i \alpha_i \in Q_n^\pos$, it follows from Corollary \ref{cor:symmetry} (and the easy fact that $\J_\alpha|_{q=0}=1$) that the polynomial $(q)_\alpha^2 \J_\alpha$ is palindromic and monic of degree
    \[ 2\deg \left( (q)_\alpha \right) - \frac{(\alpha, \alpha)}{2}-(\rho,\alpha) = \sum_{i=1}^na_i(a_i+1) -  \frac{(\alpha, \alpha)}{2}-(\rho,\alpha).\]
    In the simply-laced case, this simplifies to $\sum_{(i,j)\in E} a_ia_j$, where $E$ is the set of edges of the Dynkin diagram.

    Note that, in terms of the geometric interpretation of $\J_\alpha$, palindromicity follows from the fact that $Z^\alpha$ is Gorenstein \cite[Theorem 1.2]{BF} by a classical theorem of Stanley \cite[Theorem 4.1]{Sta}. However, it's not as clear how to obtain the degree from this perspective.
\end{remark}

\section{Combinatorial formula in type A} \label{sec:formula}

Let $Q_n = \{(x_j)_{j=1}^{n+1} \in \ZZ^{n+1} : \sum_{j=1}^{n+1} x_j = 0\}$ denote the root lattice of type $A_n$, which has a basis given by the simple roots $\alpha_1, \ldots, \alpha_n$, where \[\alpha_i=(0,\ldots, 0, 1, -1, 0, \ldots, 0)\] with the $1$ in position $i$. Let $Q_n^\pos$ denote the positive part of the root lattice, i.e. the $\NN$-span of $\alpha_1, \ldots, \alpha_n$. We reformulate Theorem \ref{thm:combi-formula} as follows.

\begin{definition} \label{def:A-numerator}
    For $\alpha = \displaystyle\sum_{i=1}^n a_i \alpha_i \in Q_n^\pos$, define polynomials $H_\alpha\in\ZZ[q]$ recursively by
    \begin{equation} \label{eq:A-recursion}
    H_\alpha = \sum_{\beta} \prod_{j=1}^{n-1} \qbinom{a_j}{b_j}_q \qbinom{a_{j+1}}{b_j}_q q^{\frac{(\beta, \beta)}{2}} H_\beta,
    \end{equation}
    where the sum is over all $\beta = \displaystyle\sum_{j=1}^{n-1} b_j \alpha_j \in Q_{n-1}^\pos$ if $n>0$, and $H_\alpha=1$ if $n=0$
\end{definition}

\begin{theorem} \label{thm:main-thm}
    For every $\alpha\in Q_n^\pos$, we have 
    \begin{equation}
        \J_\alpha = \frac{H_\alpha}{(q)_\alpha^2}
    \end{equation}
    where $H_\alpha$ is as in Definition \ref{def:A-numerator}.
\end{theorem}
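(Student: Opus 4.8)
Since the recursion (\ref{eq:A-recursion}) defining $H_\alpha$ lowers the rank, expressing a type $A_n$ quantity through type $A_{n-1}$ ones, the natural strategy is to induct on $n$. The base case $n\le 1$ is immediate (there $H_\alpha=1$ and $\J_\alpha=1/(q)_\alpha^2$). For the inductive step I would assume $\J_\beta=H_\beta/(q)_\beta^2$ for all $\beta$ in type $A_{n-1}$ and substitute this into (\ref{eq:A-recursion}); the desired identity $\J_\alpha=H_\alpha/(q)_\alpha^2$ in type $A_n$ then becomes equivalent to the branching identity
\begin{equation} \label{eq:branching}
    (q)_\alpha^2\,\J_\alpha = \sum_{\beta} \left(\prod_{j=1}^{n-1} \qbinom{a_j}{b_j}_q \qbinom{a_{j+1}}{b_j}_q\right) q^{\frac{(\beta,\beta)}{2}}\,(q)_\beta^2\,\J_\beta,
\end{equation}
where $\alpha=\sum_{i=1}^n a_i\alpha_i$, the sum runs over $\beta=\sum_{j=1}^{n-1}b_j\alpha_j\in Q_{n-1}^\pos$, and $\J_\alpha$, $\J_\beta$ denote the type $A_n$ and type $A_{n-1}$ functions respectively. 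The entire theorem thus reduces to (\ref{eq:branching}).

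To prove (\ref{eq:branching}) I would evaluate both sides with the monopole formula (\ref{eq:monopole}) of Theorem \ref{thm:geo-interpretation}. Encoding each partition $\lam^{(i)}$ by its column lengths $c^{(i)}_\ell=\#\{j:\lam^{(i)}_j\ge\ell\}$, one finds $d_\lam=\sum_{\ell\ge1}\tfrac{(\gamma_\ell,\gamma_\ell)}{2}$ with $\gamma_\ell=\sum_i c^{(i)}_\ell\alpha_i$, and a telescoping of the factors of $P_\lam$ rewrites the type $A_n$ sum as $\tfrac{1}{(q)_\alpha}$ times a sum over chains $\alpha=\gamma_0\ge\gamma_1\ge\cdots$ weighted by products of single Gaussian binomials $\qbinom{c^{(i)}_\ell}{c^{(i)}_{\ell+1}}_q$. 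The remaining task is to resum this chain expansion so as to peel off the last node and reproduce the right-hand side of (\ref{eq:branching}); the model case is $A_2$ (Example \ref{ex:A2}), where the Durfee-square identity $\sum_b q^{b^2}\qbinom{a_1}{b}_q\qbinom{a_2}{b}_q=\qbinom{a_1+a_2}{a_1}_q$ collapses the whole double sum. A self-contained alternative is to avoid (\ref{eq:branching}) altogether and verify instead that $H_\alpha/(q)_\alpha^2$ satisfies the Toda recursion (\ref{eq:Toda-recursion}); using $(q)_{\alpha-\alpha_i}=(q)_\alpha/(1-q^{a_i})$, this is equivalent to the polynomial identity
\begin{equation} \label{eq:toda-for-H}
    \left(\sum_{i=0}^n (q^{a_{i+1}-a_i}-1)\right) H_\alpha = \sum_{i=1}^n q^{a_{i+1}-a_i}(1-q^{a_i})^2\,H_{\alpha-\alpha_i},
\end{equation}
which one could check from (\ref{eq:main-formula}) by expanding, via the $q$-Pascal rule, the two Gaussian binomials in the top row that involve $a_i$. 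Since the coefficient $\sum_{i=0}^n(q^{a_{i+1}-a_i}-1)$ is a nonzero Laurent polynomial for $\alpha>0$, the Toda recursion together with $\J_0=1$ pins down $\J_\alpha$ uniquely, so either (\ref{eq:branching}) or (\ref{eq:toda-for-H}) completes the induction.

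I expect the combinatorial heart of the argument — the resummation in (\ref{eq:branching}), equivalently the passage from the chain expansion of the monopole formula to the triangular-array sum (\ref{eq:main-formula}) — to be the main obstacle, because the two sums are organized quite differently: the monopole side has unbounded depth, single binomials, and a single factor $\tfrac{1}{(q)_\alpha}$, while (\ref{eq:main-formula}) has fixed triangular shape, paired binomials, and $\tfrac{1}{(q)_\alpha^2}$. The evaluations in Examples \ref{ex:A2} and \ref{ex:A3}, and the $q=1$ Apéry specialization, furnish concrete checks on whatever multivariate $q$-Vandermonde identity is needed. It is conceivable that a cleaner, more conceptual proof of (\ref{eq:branching}) could be extracted from the geometry of Section \ref{sec:geo-interpretation}, via a stratification relating the type $A_n$ Zastava space to the Zastava spaces of the type $A_{n-1}$ sub-root-system obtained by deleting the last node, but I do not currently see how to carry this out.
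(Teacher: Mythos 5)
Your framing is the same as the paper's, and your fallback route is in fact the paper's actual proof strategy: the paper proves Theorem \ref{thm:main-thm} by verifying that $H_\alpha/(q)_\alpha^2$ satisfies the Toda recursion (\ref{eq:Toda-recursion}), which determines $\J_\alpha$ uniquely from $\J_0=1$ (your observation that $\sum_{i=0}^n(q^{a_{i+1}-a_i}-1)$ is a nonzero Laurent polynomial for $\alpha>0$ is correct and is the reason this suffices); after clearing denominators via $(q)_{\alpha-\alpha_i}=(q)_\alpha/(1-q^{a_i})$, your Toda identity for $H_\alpha$ is literally the paper's equation (\ref{eq:H-Toda-recursion}). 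The problem is that your proposal stops exactly where the real work begins. Your first route (the branching identity, to be proved by resumming the monopole formula (\ref{eq:monopole})) is explicitly abandoned --- you yourself name the resummation as the main obstacle --- and in any case its base case is overstated: that $\J_{a\alpha_1}=1/(q)_{a\alpha_1}^2$ in type $A_1$ is not ``immediate'' from the fermionic recursion (it is a Durfee-square-type identity, though it does follow in one line from the $A_1$ Toda recursion). Your second route compresses the proof of the identity (\ref{eq:H-Toda-recursion}) into the phrase ``expand, via the $q$-Pascal rule, the two Gaussian binomials in the top row that involve $a_i$.'' As stated this is not plausibly sufficient: the left-hand side carries the operator $\sum_i(q^{a_{i+1}-a_i}-1)$, whose exponents are \emph{differences} of the $a_i$'s, and applying $q$-Pascal to the top-row binomials of (\ref{eq:main-formula}) does not by itself generate the factors $q^{a_{i+1}-a_i}(1-q^{a_i})^2$ nor control the interaction with the quadratic exponent $D(\underline{m})$.

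What the paper actually does to prove (\ref{eq:H-Toda-recursion}) is a second induction, on the rank $n$, organized through the one-step recursion (\ref{eq:A-recursion}) rather than the fully unrolled sum (\ref{eq:main-formula}). It first absorbs the factors $(1-q^{a_i})^2$ using $(1-q^a)\qbinom{a-1}{b}_q=(1-q^{a-b})\qbinom{a}{b}_q$; then it invokes the key telescoping identity, Lemma \ref{lem:technical-sum}, to trade the Toda operator in the $a$-variables for the analogous operator in the $b$-variables plus explicit correction terms; then it applies the induction hypothesis (the Toda identity for $H_\beta$ in type $A_{n-1}$); and finally it closes the argument with $(1-q^{a-b})\qbinom{a}{b}_q=(1-q^{b+1})\qbinom{a}{b+1}_q$ and a telescoping sum over each $b_i$. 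Lemma \ref{lem:technical-sum} is the missing idea in your sketch: without some such exchange identity connecting the rank-$n$ and rank-$(n-1)$ Toda operators, the induction you set up cannot be closed, and no amount of $q$-Pascal manipulation on the top row alone substitutes for it. So your proposal correctly identifies the reduction and the uniqueness argument, but it has a genuine gap at the central identity; completing it along your second route would in effect require rediscovering Lemma \ref{lem:technical-sum} and the surrounding telescoping structure.
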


It's clear that Theorem \ref{thm:main-thm} is an equivalent reformulation of Theorem \ref{thm:combi-formula}. Before proving it, we will need the following technical calculation:

\begin{lemma} \label{lem:technical-sum}
    Let $(a_i)_{i\in \ZZ}, (b_i)_{i\in \ZZ}$ be sequences of integers, finitely many of which are nonzero. Then we have the identity
    \begin{align*}
        &\sum_i \left(  (q^{a_{i+1}-a_i} -1) - q^{a_{i+1}-a_i}(1-q^{a_i-b_i})(1-q^{a_i-b_{i-1}}) \right) \\
        = &\sum_i \left((q^{b_{i+1}-b_i} -1) - q^{b_i-b_{i-1}}(1-q^{a_i-b_i})(1-q^{a_{i+1}-b_{i}})\right)
    \end{align*}
\end{lemma}

\begin{proof}
    The left-hand side expands to
    \begin{align*}
        \sum_i \left( q^{a_{i+1}-b_i} + q^{a_{i+1}-b_{i-1}} - q^{a_{i+1}+a_i-b_i - b_{i-1}} - 1 \right)
    \end{align*}
    and the right-hand side expands to
    \begin{align*}
        \sum_i \left( q^{b_{i+1}-b_i} - q^{b_{i}-b_{i-1}} + q^{a_{i}-b_{i-1}} + q^{a_{i+1}-b_{i-1}} - q^{a_{i+1}+a_i-b_i - b_{i-1}} - 1 \right) \\
        = \sum_i \left( q^{a_{i}-b_{i-1}} + q^{a_{i+1}-b_{i-1}} - q^{a_{i+1}+a_i-b_i - b_{i-1}} - 1 \right)
    \end{align*}
    by telescoping. The difference between the two sides is therefore
    \[ \sum_i \left(q^{a_{i+1}-b_i} - q^{a_{i}-b_{i-1}}\right) \]
    which vanishes by telescoping.
\end{proof}

\begin{proof}[Proof of Theorem \ref{thm:main-thm}]
    This is obvious for $\alpha=0$, so it's enough to check that $\frac{H_\alpha}{(q)_\alpha^2}$ satisfies the Toda recursion (\ref{eq:Toda-recursion}). Multiplying by $(q)^2_\alpha$ on both sides, we can rewrite this Toda recursion as
    \begin{equation} \label{eq:H-Toda-recursion}
        \left(\sum_{i} (q^{a_{i+1}-a_i} -1)\right) H_\alpha =  \sum_{i} q^{a_{i+1}-a_i} (1-q^{a_i})^2 H_{\alpha-\alpha_i},
    \end{equation}
    where for convenience we will have the sums run over all of $\ZZ$ with the convention that $a_i=0$ if $i\le 0$ or $i>n$. Note that if $a_i=0$ the coefficient of $H_{\alpha-\alpha_i}$ vanishes, so we don't have to worry about $H_{\alpha-\alpha_i}$ being undefined in this case.
    
    We prove (\ref{eq:H-Toda-recursion}) by induction on $n$, the base case $n=0$ being trivial. Note that, using (\ref{eq:A-recursion}) and the identity
    \[(1-q^a)\qbinom{a-1}{b}_q = (1-q^{a-b}) \qbinom{a}{b}_q,\]
    we have
    \begin{align*}
        (1-q^{a_i})^2H_{\alpha-\alpha_i}&= (1-q^{a_i})^2 \sum_{\beta} \qbinom{a_i-1}{b_i}_q \qbinom{a_{i}-1}{b_{i-1}}_q \prod_{j\ne i} \qbinom{a_j}{b_j}_q \qbinom{a_{j}}{b_{j-1}}_q q^{\frac{(\beta,\beta)}{2}} H_\beta \\
        &= (1-q^{a_i-b_i})(1-q^{a_i-b_{i-1}}) \sum_{\beta} \prod_{j} \qbinom{a_j}{b_j}_q \qbinom{a_{j}}{b_{j-1}}_q q^{\frac{(\beta,\beta)}{2}} H_\beta. 
    \end{align*}
    Substituting this into the right-hand side of (\ref{eq:H-Toda-recursion}), substituting (\ref{eq:A-recursion}) into the left-hand side, and bringing all the terms to the same side, we see that (\ref{eq:H-Toda-recursion}) is equivalent to the vanishing of
    \begin{align*}
        \sum_\beta \left(  (q^{a_{i+1}-a_i} -1) - q^{a_{i+1}-a_i}(1-q^{a_i-b_i})(1-q^{a_i-b_{i-1}}) \right) \\
        \prod_{j} \qbinom{a_j}{b_j}_q \qbinom{a_{j}}{b_{j-1}}_q q^{\frac{(\beta,\beta)}{2}} H_\beta.\qquad
    \end{align*}
    By Lemma \ref{lem:technical-sum}, this is equal to 
    \begin{align*}
        \sum_\beta\sum_i \left((q^{b_{i+1}-b_i} -1) - q^{b_i-b_{i-1}}(1-q^{a_i-b_i})(1-q^{a_{i+1}-b_{i}})\right) \\\prod_{j} \qbinom{a_j}{b_j}_q \qbinom{a_{j}}{b_{j-1}}_q q^{\frac{(\beta,\beta)}{2}} H_\beta.\qquad
    \end{align*}
    By the induction hypothesis, we have $\sum_i (q^{b_{i+1}-b_i} -1) H_\beta = \sum_{i} q^{b_{i+1}-b_i} (1-q^{b_i})^2 H_{\beta-\beta_i}$ where $\beta_i$ denote the $i^{th}$ simple root in $Q_{n-1}$. Substituting this in the expression above yields

    \begin{align*}
        \sum_\beta\sum_i \left(q^{b_{i+1}-b_i} (1-q^{b_i})^2 H_{\beta-\beta_i} - q^{b_i-b_{i-1}}(1-q^{a_i-b_i})(1-q^{a_{i+1}-b_{i}})H_\beta\right) \\\prod_{j} \qbinom{a_j}{b_j}_q \qbinom{a_{j}}{b_{j-1}}_q q^{\frac{(\beta,\beta)}{2}},\qquad
    \end{align*}
    which we can write as
    \begin{align*}
        \sum_i\sum_\beta \left(q^{b_{i+1}-b_i} (1-q^{b_i})^2 H_{\beta-\beta_i} - q^{b_i-b_{i-1}}(1-q^{a_i-b_i})(1-q^{a_{i+1}-b_{i}})H_\beta\right) \\
        \qbinom{a_i}{b_i}_q \qbinom{a_{i+1}}{b_{i}}_q q^{b_i^2-b_ib_{i-1} - b_ib_{i+1}} P_{i,\beta},\qquad
    \end{align*}
    where $P_{i,\beta}$ is a term that depends on $\beta$ only via the coefficients $b_j$ for $j\ne i$ (so $P_{i,\beta} =  P_{i,\beta+\beta_i}$). Using the identity
    \[(1-q^{a-b})\qbinom{a}{b}_q = (1-q^{b+1}) \qbinom{a}{b+1}_q,\]
    this can be rewritten again as
    \begin{align*}
    \sum_i\sum_\beta \biggl( &q^{b_{i+1}-b_i} (1-q^{b_i})^2 \qbinom{a_i}{b_i}_q \qbinom{a_{i+1}}{b_{i}}_q H_{\beta-\beta_i} \\
    - &q^{b_i-b_{i-1}}(1-q^{b_i+1})^2 \qbinom{a_i}{b_i+1}_q \qbinom{a_{i+1}}{b_{i}+1}_qH_\beta \biggr) q^{b_i^2-b_ib_{i-1} - b_ib_{i+1}} P_{i,\beta}. \\
    = \sum_i\sum_\beta \biggl( &q^{b_i(b_i-1)-b_ib_{i-1} - (b_i-1)b_{i+1}} (1-q^{b_i})^2 \qbinom{a_i}{b_i}_q \qbinom{a_{i+1}}{b_{i}}_q H_{\beta-\beta_i} \\
    - &q^{(b_i+1)b_i-(b_i+1)b_{i-1} - b_ib_{i+1}}(1-q^{b_i+1})^2 \qbinom{a_i}{b_i+1}_q \qbinom{a_{i+1}}{b_{i}+1}_qH_\beta \biggr) P_{i,\beta}. \\
    \end{align*}
    This last expression takes the form $\sum_i \sum_\beta (Q_{i,\beta}-Q_{i,\beta+\beta_i})$, for
    \[ Q_{i,\beta} = q^{b_i(b_i-1)-b_ib_{i-1} - (b_i-1)b_{i+1}} (1-q^{b_i})^2 \qbinom{a_i}{b_i}_q \qbinom{a_{i+1}}{b_{i}}_q H_{\beta-\beta_i} P_{i,\beta}.\]
    Since $Q_{i,\beta}=0$ when $b_i=0$, this vanishes by telescoping the sum over $b_i$.
\end{proof}

\begin{remark} \label{rmk:narayana}
    In the special case where $\alpha=\alpha_1+\ldots+\alpha_n$ is the highest root, triangular arrays $\underline{m}=(m_{k,i})_{1\le i\le  k \le n}$ satisfying $m_{n,i}=1$ and $m_{k,i} \le m_{k+1,i},m_{k+1,i+1}$ are naturally in bijection with Dyck paths of length $2n$. To a triangular array, we associate the path which separates the zeros from the ones in the array, for example:

    \begin{center}
    \begin{tikzpicture}
	\node at (1,0) [font=\fontsize{12}{0}\selectfont] {$1$};
    \node at (3,0) [font=\fontsize{12}{0}\selectfont] {$1$};
    \node at (5,0) [font=\fontsize{12}{0}\selectfont] {$1$};
    \node at (7,0) [font=\fontsize{12}{0}\selectfont] {$1$};
    \node at (2,1) [font=\fontsize{12}{0}\selectfont] {$0$};
    \node at (4,1) [font=\fontsize{12}{0}\selectfont] {$1$};
    \node at (6,1) [font=\fontsize{12}{0}\selectfont] {$1$};
    \node at (3,2) [font=\fontsize{12}{0}\selectfont] {$0$};
    \node at (5,2) [font=\fontsize{12}{0}\selectfont] {$0$};
    \node at (4,3) [font=\fontsize{12}{0}\selectfont] {$0$};
    \draw[thick,red] (0,0) -- (1,1) -- (2,0) -- (3,1) -- (4,2) -- (5,1) -- (6,2) -- (7,1) -- (8,0);
    \end{tikzpicture}
    \end{center}
    (here the array is drawn with the convention that $m_{k,j}$ is the $i^\text{th}$ number from the left on the $k^\text{th}$ row from top). 
    
    Moreover, under this bijection, $D(\underline{m})$ corresponds exactly to $n-1$ minus the number of valleys of the corresponding Dyck path (where a valley is a step down followed by a step up). To see this, note first that $D(\underline{m})$ is easily seen to count the number of $1$'s in the array $\underline{m}$ that are not in the bottom row and have no $1$'s directly to their right. One can check that every diagonal contains at most one such "rightmost $1$", and it contains one if and only if it has the same number of $0$'s as the diagonal directly below\footnote{Here we use \emph{diagonal} to mean a line consisting of entries $m_{k,i}$ for a fixed $i$, i.e. these are the diagonals with a bottom-left to top-right orientation.}. On the other hand, two consecutive diagonals have the same number of $0$'s if only if there is no valley between them. Since there are $n-1$ possible pairs of consecutive diagonals, this proves the claim. For example, for the array above, there are valleys between diagonals $1$ and $2$ and between diagonals $2$ and $3$, and there is a rightmost $1$ in diagonal $3$.

    It then follows from Theorem \ref{thm:combi-formula} that $K_\alpha$ is the $n^\text{th}$ Catalan number and that $(1-q)^{2n} \J_\alpha$ is the $n^\text{th}$ Narayana polynomial (which is the generating polynomial for the "number of valleys" statistic for Dyck paths). 
    
    Note that the $n^\text{th}$ Narayana polynomial is also known to arise as the numerator of the Hilbert series for the homogeneous coordinate ring of the Grassmannian of planes $G(2,n+2)$ in its Plucker embedding \cite{GW}. This raises the question of a possible connection (such as a flat degeneration) between this Grassmannian and the Zastava space $Z^\alpha$ which would explain this equality of Hilbert series. We do not know of such an explanation.

\end{remark}

\section{A conjectural topological interpretation} \label{sec:top-interpretation}

Empirically, the polynomial $(q)_\alpha^2\J_\alpha$ appears to always be unimodal. This naturally raises the question of whether there exists a smooth projective variety whose even Betti numbers are the coefficients of this polynomial (as this would explain unimodality by the hard Lefschetz theorem). In this section, we propose a candidate for such a variety in type $A$. Throughout this section, we let $\alpha = \displaystyle\sum_{i=1}^n a_i \alpha_i$ be an element of $Q_n^\pos$.

\begin{definition} \label{def:X-variety}
     Let $V_i$ be a vector space of dimension $a_i$ for $1\le i \le n$ and let $V=\displaystyle\bigoplus_{i=1}^n V_i$. Let $X_\alpha$ be the scheme over $\CC$ parameterizing triangular arrays $(W_{i,j})_{1\le i \le j \le n-1}$ of subspaces of $V$ satisfying
    \begin{itemize}
        \item $\dim W_{i,j}=\sum_{i\le k \le j} a_k$,
        \item $W_{i,j}\subseteq W_{i-1,j}$ if $i>1$ and  $W_{i,j}\subseteq W_{i,j+1}$ if $j<n-1$,
        \item $W_{i,n-1}\subseteq \displaystyle\bigoplus_{k=i}^n V_k$ and $W_{1,j}\subseteq \displaystyle\bigoplus_{k=1}^{j+1} V_k$.
    \end{itemize}
\end{definition}

For example, for $n=4$, $X_\alpha$ parametrizes fillings of the following diagram with subspaces $W_{i,j}$, where arrows represent inclusions and dimensions are indicated in red:

\[\adjustbox{scale=0.75,center}{%
\begin{tikzcd}
	& {V_1\oplus V_2 \oplus V_3} && \begin{array}{c} \substack{W_{1,3} \\ \color{red}a_1+a_2+a_3} \end{array} && {V_2\oplus V_3\oplus V_4} \\
	{V_1\oplus V_2} && \begin{array}{c} \substack{W_{1,2} \\ \color{red}a_1+a_2} \end{array} && \begin{array}{c} \substack{W_{2,3} \\ \color{red}a_2+a_3} \end{array} && {V_3\oplus V_4} \\
	& \begin{array}{c} \substack{W_{1,1} \\ \color{red}a_1} \end{array} && \begin{array}{c} \substack{W_{2,2} \\ \color{red}a_2} \end{array} && \begin{array}{c} \substack{W_{3,3} \\ \color{red}a_3} \end{array}
	\arrow[from=2-3, to=1-2]
	\arrow[from=2-3, to=1-4]
	\arrow[from=2-5, to=1-4]
	\arrow[from=2-5, to=1-6]
	\arrow[from=3-2, to=2-1]
	\arrow[from=3-2, to=2-3]
	\arrow[from=3-4, to=2-3]
	\arrow[from=3-4, to=2-5]
	\arrow[from=3-6, to=2-5]
	\arrow[from=3-6, to=2-7]
\end{tikzcd}}\]

Note that $X_\alpha$ is a closed subscheme of a product of Grassmannians, hence it is projective.

\begin{proposition} \label{prop:smooth}
   The scheme $X_\alpha$ is smooth, connected, and of dimension \[\sum_{i=0}^{n-1}a_ia_{i+1}.\]
\end{proposition}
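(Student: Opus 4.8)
The plan is to organize everything around the bottom row $(W_{k,k})_{1\le k\le n-1}$ of the array. First I would record the elementary but crucial observation that, for \emph{any} point of $X_\alpha$, chasing the inclusions up-left and up-right from $W_{k,k}$ to the boundary gives $W_{k,k}\subseteq W_{1,k}\subseteq\bigoplus_{l=1}^{k+1}V_l$ and $W_{k,k}\subseteq W_{k,n-1}\subseteq\bigoplus_{l=k}^{n}V_l$, hence $W_{k,k}\subseteq V_k\oplus V_{k+1}$; similarly every node satisfies $W_{i,j}\supseteq\sum_{i\le k\le j}W_{k,k}$. This produces a morphism $\pi\colon X_\alpha\to B:=\prod_{k=1}^{n-1}\op{Gr}(a_k,V_k\oplus V_{k+1})$ remembering the bottom row, where $B$ is smooth, irreducible, and of dimension $\sum_{k=1}^{n-1}a_ka_{k+1}=\sum_{i=0}^{n-1}a_ia_{i+1}=:d$ (which matches the degree of the palindromic numerator from Remark~\ref{rmk:palindromic}, as required by the conjectural interpretation). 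Over the open locus $U_B\subseteq B$ where the partial sums $\sum_{i\le k\le j}W_{k,k}$ all have the expected dimension $a_i+\cdots+a_j$, each node is forced to equal $\sum_{i\le k\le j}W_{k,k}$, so $\pi$ restricts to an isomorphism $U:=\pi^{-1}(U_B)\xrightarrow{\sim}U_B$; since generic bottom rows are transverse, $U$ is a nonempty smooth irreducible open subvariety of $X_\alpha$ of dimension $d$.

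Second, I would pin down $\dim X_\alpha$ and connectedness. Realizing $X_\alpha$ as a closed subscheme of the smooth product of Grassmannians $Y=\prod_{i\le j}\op{Gr}(s_{i,j},A_{i,j})$, where $s_{i,j}=a_i+\cdots+a_j$ and $A_{i,j}$ is the relevant coordinate subspace for a boundary node and $V$ for an interior node, cut out by the parent-child incidence conditions, a direct count shows the number of defining equations equals $\dim Y-d$, so by Krull's height theorem every irreducible component of $X_\alpha$ has dimension $\ge d$. On the other hand $\pi$ is birational (an isomorphism over $U_B$), and bounding the fibers of $\pi$ over $B\setminus U_B$ — over a bottom row whose partial sums have total defect $\delta$, the remaining nodes vary in a product of Grassmannians of dimension of order $\delta$, while the locus carrying that defect has codimension growing with $\delta$ in $B$ — gives $\dim\pi^{-1}(B\setminus U_B)<d$. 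Hence no component is contained in $X_\alpha\setminus U=\pi^{-1}(B\setminus U_B)$; every component therefore meets the irreducible set $U$, so $X_\alpha=\overline{U}$ is irreducible, in particular connected, of dimension $d$. For $n=3$ this $X_\alpha$ is exactly the blow-up of $\PP^1\times\PP^1=B$ at the point $W_{1,1}=W_{2,2}=V_2$.

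Third, and this is the heart of the matter, I would prove smoothness. Writing a deformation of the node $(i,j)$ as $\phi_{i,j}\in\Hom(W_{i,j},A_{i,j}/W_{i,j})$, the Zariski tangent space $T_pX_\alpha$ is the space of tuples $(\phi_{i,j})$ whose components agree, modulo $W_{i',j'}$, along every edge $W_{i,j}\subseteq W_{i',j'}$ of the diagram. The goal is to show $\dim T_pX_\alpha=d$ at \emph{every} point $p$, including the non-transverse points of $X_\alpha\setminus U$; since $X_\alpha$ is irreducible of dimension $d$ we have $\dim T_pX_\alpha\ge\dim_pX_\alpha=d$, so the reverse inequality $\dim T_pX_\alpha\le d$ forces $p$ to be a smooth point. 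I expect to organize this computation along $d\pi_p\colon T_pX_\alpha\to T_{\pi(p)}B$, analyzing $\ker d\pi_p$ (first-order motions of the upper nodes with the bottom row frozen, which feeds into an inductive description in terms of smaller arrays) and $\op{im}d\pi_p$ separately, and showing their dimensions add up to $d$.

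The main obstacle is precisely this last tangent-space bound at the non-transverse points: away from $U$ the nodes acquire genuine moduli (the exceptional directions of the blow-up phenomenon already visible for $n=3$), so the compatibility equations along the diagram are no longer independent in the naive way, and one must show that the drop in the rank of $d\pi_p$ is exactly compensated by the new motions in $\ker d\pi_p$. Controlling these ranks uniformly over all configurations — presumably by an induction on the diagram that peels off either the bottom row or the apex and matches the recursive structure of Definition~\ref{def:A-numerator} — is the delicate step; the first two paragraphs, by contrast, reduce to standard facts about Grassmannians and dimension counts once the coordinate-subspace observation is in hand.
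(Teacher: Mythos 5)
Your proposal has a genuine gap at its core: smoothness is never actually proved. Your third paragraph is a plan rather than an argument --- the bound $\dim T_pX_\alpha\le d$ at the non-transverse points (where the spans $\sum_{i\le k\le j}W_{k,k}$ degenerate) is precisely the hard content of the proposition, and you explicitly defer it as ``the delicate step.'' Worse, the irreducibility-and-dimension statement you rely on to reduce smoothness to this tangent bound is itself unsupported, because the equation count in your second paragraph is false for $n\ge 4$. Presenting each incidence $W_{i,j}\subseteq W_{i',j'}$ by the natural $\dim W_{i,j}\cdot\op{codim}W_{i',j'}$ equations (with either choice of ambient spaces, say $A_{i,j}=V_i\oplus\cdots\oplus V_{j+1}$), a direct computation for $n=4$ gives
\[
\dim Y-\#\{\text{equations}\}=a_1a_2+a_2a_3+a_3a_4-a_2a_4<d,
\]
because the incidence equations are redundant: the conditions $W_{2,2}\subseteq W_{1,2}$ and $W_{2,2}\subseteq W_{2,3}$ overlap modulo $W_{1,3}$, and this deficiency grows with $n$. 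So Krull's theorem only yields $\dim\ge d-(\text{correction})$, not $\ge d$, and your conclusion that every component meets $U$ does not follow. The fiber-dimension bound over the defect locus $B\setminus U_B$ (``of order $\delta$,'' ``codimension growing with $\delta$'') is likewise asserted, not proved. Your first paragraph is fine --- the observation $W_{k,k}\subseteq V_k\oplus V_{k+1}$ and the isomorphism over $U_B$ are correct, and the $n=3$ blow-up picture is right --- but these are the easy parts.

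For contrast, the paper avoids all pointwise tangent computations by a homogeneity trick (due to Knutson): enlarge $X_\alpha$ to the variety $\til{X}_\alpha$ in which the right-hand flag $(W_{n,n},\ldots,W_{1,n})$ is allowed to vary; this $\til{X}_\alpha$ is an iterated Grassmannian bundle over a point, hence visibly smooth and connected of known dimension. The forgetful map $\pi\colon\til{X}_\alpha\to\Fl_\alpha$ to the partial flag variety is equivariant for the parabolic $P$ stabilizing the opposite flag $(V_1,V_1\oplus V_2,\ldots)$, the unipotent radical $U$ acts simply transitively on the open Bruhat cell, and one gets $\pi^{-1}(\Fl_\alpha^\circ)\cong U\times X_\alpha$; smoothness, connectedness, and $\dim X_\alpha=\sum_i a_ia_{i+1}$ all drop out at once, since $X_\alpha$ is the fiber over the cell's base point. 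If you want to salvage your approach, some such group-action argument (making all fibers of a suitable map isomorphic, or realizing $X_\alpha$ inside a smooth family trivialized by a group) is the natural replacement for the uniform rank analysis of $d\pi_p$ that you correctly identify as the main obstacle but do not carry out.
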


\begin{proof}
    Let $\til{X}_\alpha$ parameterize triangular arrays $(W_{i,j})_{1\le i \le j \le n}$ of subspaces of $V$ satisfying
    \begin{itemize}
        \item $\dim W_{i,j}=\sum_{i\le k \le j} a_k$,
        \item $W_{i,j}\subseteq W_{i-1,j}$ if $i>1$ and  $W_{i,j}\subseteq W_{i,j+1}$ if $j<n$,
        \item $W_{1,j}\subseteq \displaystyle\bigoplus_{k=1}^{j+1} V_k$.
    \end{itemize}
    In other words, $\til{X}_\alpha$ is defined in the same way as $X_\alpha$ with the modification that the flag on the right of the diagram is now allowed to vary rather than being fixed to $(V_n, V_{n-1}\oplus V_n, \ldots, V_2\oplus \cdots \oplus V_n, V)$. Then $\til{X}_\alpha$ is smooth and connected, as it can be constructed as an iterated Grassmannian bundle starting from a point (by choosing the subspaces $W_{i,j}$ successively, where we run through all pairs $(i,j)$ with $1\le i,j\le n$, $(i,j)\ne (1,n)$ in lexicographic order). We can also see from this iterated Grassmannian bundle contruction that

    \[ \dim \til{X}_\alpha = \sum_{i=0}^{n-1}a_ia_{i+1} + \sum_{1 \le i < j \le n} a_{i}a_j, \]
    where the term indexed by $i$ in the first sum accounts for the choice of $W_{1,i}$ and the term indexed by $i,j$ in the second sum accounts for the choice of $W_{i+1, j}$.

    Moreover, there is a map $\pi:\til{X}_\alpha \to \Fl_\alpha$,  where $\Fl_\alpha$ is the partial flag variety of partial flags in $V$ of signature $(a_n, a_{n-1}+a_n, \ldots, a_1+\ldots+a_n)$, which remembers the right partial flag $(W_{n,n}, W_{n-1,n}, \ldots, W_{2,n}, W_{1,n})$. We have $X_\alpha = \pi^{-1}(F_\bullet^-)$, where $F_\bullet^-\in \Fl_\alpha$ is the partial flag $(V_n, V_{n-1}\oplus V_n, \ldots, V_2\oplus \cdots \oplus V_{n}, V)$.
    
    The map $\pi$ is equivariant for the action of the parabolic subgroup $P$ of $\GL(V)$ consisting of elements that preserve the partial flag $F_\bullet = (V_1, V_{1}\oplus V_2, \ldots, V_1\oplus \cdots \oplus V_{n-1}, V)$. Let $\Fl_\alpha^\circ = P\cdot F_\bullet^-\subset \Fl_\alpha$ be the open Bruhat cell and let $\til{X}^\circ_\alpha=\pi^{-1}(\Fl_\alpha^\circ)$. If $U$ is the unipotent radical of $P$, then the map $U \to \Fl_\alpha^\circ$ sending $u$ to $u\cdot F_\bullet^-$ is an isomorphism. It follows that the map $U\times X_\alpha \to \til{X}^\circ_\alpha$ sending $(u,p)$ to $(u\cdot F_\bullet^-, u\cdot p)$ is an isomorphism. Since $\til{X}^\circ_\alpha$ is smooth and connected, it follows that $X_\alpha$ is smooth and connected. Moreover,
    \begin{align*}
        \dim(X^\alpha)&= \dim(\til{X}_\alpha^\circ)-\dim(U) \\
        &= \left(\sum_{i=0}^{n-1}a_ia_{i+1} + \sum_{1 \le i < j \le n} a_{i}a_j\right)-\left( \sum_{1 \le i < j \le n} a_{i}a_j\right) \\
        &= \sum_{i=0}^{n-1}a_ia_{i+1}.
    \end{align*}
\end{proof}

The following conjecture is the main reason for our interest in the variety $X_\alpha$:

\begin{conjecture}\label{conj:poincare-poly}
    The Poincar\'e polynomial 
    \[ P_{X_\alpha}(t) = \sum_{i=0}^{2\dim X_\alpha} \dim H^{i}(X_\alpha)\cdot t^i\]
    of $X_\alpha$ is equal to $(q)_\alpha^2\J_\alpha|_{q=t^2}$.
\end{conjecture}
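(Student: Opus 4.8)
The plan is to compute the Poincaré polynomial of $X_\alpha$ via a cell decomposition and show that the resulting generating function matches the recursion defining $H_\alpha$ in Definition \ref{def:A-numerator}. Since $X_\alpha$ is a smooth projective variety admitting a torus action (inherited from the diagonal $(\CC^\times)^{\dim V}$ action on $V$ preserving the decomposition $V=\bigoplus V_i$), one expects it to have a Bialynicki-Birula decomposition into affine cells, so that $H^{\text{odd}}(X_\alpha)=0$ and $P_{X_\alpha}(t)=\sum_{\text{cells}} t^{2\dim(\text{cell})}$. The first step would be to set up this torus action, verify that its fixed points are isolated, and parametrize them. Concretely, each fixed point should correspond to a choice of coordinate subspaces for each $W_{i,j}$, and I would expect these to be indexed precisely by the triangular arrays $\underline{m}=(m_{k,i})$ appearing in Theorem \ref{thm:combi-formula}, where $m_{k,i}$ records how many basis vectors of $V_i$-type lie in a given subspace.

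The key step is then to compute the dimension of the attracting cell (equivalently, the number of positive weights of the tangent space) at each fixed point and match it to the exponent $D(\underline{m})$ together with the contribution of the Gaussian binomial coefficients in formula (\ref{eq:main-formula}). The Gaussian binomials $\qbinom{m_{k+1,i}}{m_{k,i}}_q$ strongly suggest that the variety fibers as an iterated tower of Grassmannian-type bundles (mirroring the iterated Grassmannian bundle structure of $\til{X}_\alpha$ used in the proof of Proposition \ref{prop:smooth}), and that each Grassmannian factor contributes its own Poincaré polynomial $\qbinom{m_{k+1,i}}{m_{k,i}}_{t^2}$ via the standard Schubert cell decomposition. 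I would first treat the smallest cases ($n=2,3$) by hand, reproducing Examples \ref{ex:A2} and \ref{ex:A3}, to confirm that the cell dimensions organize into exactly the product of two Gaussian binomials per node and the quadratic form $D(\underline{m})$.

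Rather than computing all tangent weights directly, a cleaner approach is to prove the matching recursively, mirroring the recursion (\ref{eq:A-recursion}). The idea is to exhibit a map from $X_\alpha$ (type $A_n$ data) that forgets the outermost layer of subspaces and lands in a copy of $X_\beta$-type data for type $A_{n-1}$, realizing $X_\alpha$ as a bundle whose fibers decompose according to the choices of $b_j=m_{n-1,j}$. If the fiber over a point with inner data $\beta=\sum b_j\alpha_j$ splits (after a Bialynicki-Birula stratification) into pieces of Poincaré polynomial $\prod_j \qbinom{a_j}{b_j}_{t^2}\qbinom{a_{j+1}}{b_j}_{t^2}\, t^{2\cdot\frac{(\beta,\beta)}{2}}$, then additivity of Poincaré polynomials over a cell decomposition yields exactly the recursion $P_{X_\alpha}(t)=\sum_\beta \prod_j \qbinom{a_j}{b_j}_{t^2}\qbinom{a_{j+1}}{b_j}_{t^2}\,t^{(\beta,\beta)}\,P_{X_\beta}(t)$, which is (\ref{eq:A-recursion}) with $q=t^2$. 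Combined with the base case $X_0=\text{pt}$, this would complete the proof by induction on $n$.

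The main obstacle will be the bookkeeping of the Bialynicki-Birula attracting dimensions, i.e. verifying that the $t$-grading produced by the geometry is exactly $D(\underline{m})$ (equivalently $\frac{(\beta,\beta)}{2}$ in the recursive formulation) rather than off by some correction. In particular, because $X_\alpha$ sits inside $\til{X}_\alpha$ as a fiber of $\pi$ over the non-generic flag $F_\bullet^-$ while carrying the action of the opposite unipotent group $U$, one must check that restricting to this fiber does not destroy the affine-paving property and that the weights are computed relative to the correct (opposite) Borel. I would expect the inductive fibration to make the weight computation tractable one layer at a time, reducing the quadratic form $D(\underline{m})$ to a sum of local contributions $m_{k,i}^2 - m_{k,i}m_{k,i+1}$ that can be read off from the tangent space of a single Grassmannian factor relative to the fixed incidence conditions. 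The $n=3$ case, proven separately in the paper, serves as the crucial sanity check that the weight contributions assemble correctly.
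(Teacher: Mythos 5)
First, be aware that the statement you set out to prove is stated in the paper as a conjecture: the paper proves it only in special cases ($n=2$, where $X_\alpha$ is a Grassmannian; $n=3$; and the highest-root case via the toric variety of the associahedron), so there is no full proof to compare against, and a complete argument along your lines would go beyond the paper. Your first step --- a Bia\l{}ynicki-Birula decomposition from a maximal torus of $\prod_i \GL(V_i)$ --- is sound and is exactly the paper's Proposition \ref{prop:BB}; it gives $H^{\mathrm{odd}}(X_\alpha)=0$ and $P_{X_\alpha}(t)=\sum_{\ul{T}} t^{2d_{\ul{T}}}$. But your expectation that the fixed points ``should be indexed precisely by the triangular arrays $\ul{m}$'' of Theorem \ref{thm:combi-formula} is incorrect: the fixed points are coordinate-subspace configurations, indexed by triangular arrays of \emph{subsets} $T_{i,j}\subseteq S$ (the set $\mathcal{X}_\alpha$), a strictly finer index set. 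Consequently the fixed-point generating function does not match formula (\ref{eq:main-formula}) term by term: already for $n=3$ the fixed-point sum carries an extra parameter $k=\#(S_1\setminus T_{1,2})$ and six Gaussian binomials, and collapsing it to the two-parameter sum of Example \ref{ex:A3} requires the nontrivial $q$-analogue of Nanjundiah's identity (Lemma \ref{lem:q-nanjundiah}). No such collapse is known for general $n$; this is precisely where the conjecture remains open, so your ``sanity check in small cases'' is in fact the entire (unresolved) difficulty.

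Second, your proposed inductive step --- a forgetful map exhibiting $X_\alpha$ as fibered over type $A_{n-1}$ data, with strata contributing $\prod_j\qbinom{a_j}{b_j}_{t^2}\qbinom{a_{j+1}}{b_j}_{t^2}\,t^{(\beta,\beta)}P_{X_\beta}(t)$ --- is the genuinely missing idea, and as stated it has no candidate realization. All the dimensions $\dim W_{i,j}=\sum_{i\le k\le j}a_k$ in Definition \ref{def:X-variety} are determined by $\alpha$, so there is no visible sublocus or quotient datum whose ``dimension vector'' is the variable $\beta\in Q_{n-1}^\pos$ of recursion (\ref{eq:A-recursion}): the arrays $\ul{m}$ index terms of a combinatorial formula, not configurations of subspaces with prescribed dimensions. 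Moreover, $X_\alpha$ itself is not an iterated Grassmannian bundle --- that structure exists only for $\til{X}_\alpha$; $X_\alpha$ is the fiber of $\pi$ over the special flag $F_\bullet^-$, and the extra incidence conditions $W_{i,n-1}\subseteq\bigoplus_{k=i}^n V_k$ destroy the naive bundle structure (this is exactly why the paper proves smoothness indirectly, through the $U$-action on $\til{X}^\circ_\alpha$). Finally, even granting a stratification by some invariant $\beta$, Poincar\'e polynomials are not additive over arbitrary locally closed stratifications; you would need the strata to be unions of Bia\l{}ynicki-Birula cells, or to argue via point counts for a polynomial-count variety, neither of which you establish. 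In sum, your plan correctly rederives the paper's reduction of the conjecture to a combinatorial identity (Proposition \ref{prop:BB}), but the inductive geometric step that would actually prove it is unsupported and, in the form described, fails.
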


\begin{example}
    If $n=2$, and $\alpha =a_1\alpha_1+a_2\alpha_2$, then $X_{\alpha}$ is simply the Grassmannian $G(a_1,a_1+a_2)$, which has Poincar\'e polynomial $\qbinom{a_1+a_2}{a_1}_{t^2}$ as predicted by Conjecture \ref{conj:poincare-poly} (compare with Example \ref{ex:A2}).
\end{example}

\begin{example}
    In the setting of Remark \ref{rmk:narayana} (i.e. $\alpha=a_1+\ldots + \alpha_n$), $X_\alpha$ is a brick variety which is known to be isomorphic to the toric variety of the associahedron \cite[Corollary 2.10]{Esc}. By \cite[Theorem 12.3.12]{CLS}, the Betti numbers of this toric variety are given by the $h$-vector of the simplicial complex dual to the associahedron. The entries of this $h$-vector are exactly the Narayana numbers \cite[Lemma 5.7]{FR}, so Conjecture \ref{conj:poincare-poly} follows in that special case by our identification of $(q)_\alpha^2\J_\alpha$ as a Narayana polynomial.
\end{example}

As a sanity check for the reasonability of Conjecture \ref{conj:poincare-poly}, note that both $P_{X_\alpha}$ and $(q)_\alpha^2\J_\alpha|_{q=t^2}$ are monic palindromic polynomials of degree $2\sum_{i=1}^{n-1}a_ia_{i+1}$ (by Proposition \ref{prop:smooth} and Remark \ref{rmk:palindromic}).

Note also that $P_{X_\alpha}$ is really a polynomial in $t^2$, since $X_\alpha$ has no cohomology in odd degrees. This follows from the fact that $X_\alpha$ has a torus action (by a maximal torus in $\prod_{i=1}^n GL(V_i)$) with finitely many fixed points, from which we get a Bia\l{}ynicki-Birula decomposition of $X_\alpha$ into affine spaces. In fact, we can use this decomposition to give a combinatorial description of $P_{X_\alpha}$, reducing Conjecture \ref{conj:poincare-poly} to a combinatorial problem.

\begin{proposition}\label{prop:BB}
    For each $1\le i \le n$, let $S_i$ be totally ordered set with $a_i$ elements, and let $S=\bigsqcup_{i=1}^n S_i$, ordered so that elements of $S_i$ are smaller than elements of $S_j$ if $i<j$. Let $\mathcal{X}_\alpha$ be the set of triangular arrays $\ul{T}=(T_{i,j})_{1\le i\le j\le n-1}$ of subsets of $S$ satisfying
    \begin{itemize}
        \item $\# T_{i,j}=\sum_{i\le k \le j} a_k$,
        \item $T_{i,j}\subseteq T_{i-1,j}$ if $i>1$ and  $T_{i,j}\subseteq T_{i,j+1}$ if $j<n-1$,
        \item $T_{i,n-1}\subseteq \displaystyle\bigsqcup_{k=i}^n S_k$ and $T_{1,j}\subseteq \displaystyle\bigsqcup_{k=1}^{j+1} S_k$.
    \end{itemize}
    Given $\ul{T}\in \mathcal{X}_\alpha$, let 
    \[d_{\ul{T}}=\sum_{1\le i\le j \le n-1} d_{\ul{T}}^{i,j}-\sum_{1\le i<j\le n-1} a_i a_j,\]
    where $d_{\ul{T}}^{i,j}$ is the number of pairs $(s,t)$ with $s\in T_{i,j}\setminus T_{i,j-1}$, $t\in T_{i-1,j}\setminus T_{i,j}$ and $s > t$, with the convention that $T_{i,i-1}=\varnothing$ and $T_{0,j}=\bigsqcup_{i=1}^{j+1} S_i$. Then,
    \[P_{X_\alpha}(t) = \sum_{\ul{T}\in \mathcal{X}_\alpha} t^{2d_{\ul{T}}}.\]
\end{proposition}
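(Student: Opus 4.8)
The plan is to compute the cohomology of $X_\alpha$ via the Bia\l{}ynicki-Birula decomposition coming from a suitably generic one-parameter subgroup of the maximal torus $T=\prod_i (\CC^\times)^{a_i}$ acting on each $V_i$, and to match the resulting affine cells with the combinatorial objects $\ul{T}\in\mathcal{X}_\alpha$. The key point is that the $T$-fixed points of $X_\alpha$ are exactly those arrays in which every subspace $W_{i,j}$ is spanned by a subset of a fixed eigenbasis of $V$; choosing a basis of each $V_i$ indexed by $S_i$, a fixed point is precisely a coordinate array $(W_{i,j})$ with $W_{i,j}=\op{span}(T_{i,j})$, and the defining inclusions and dimension constraints on the $W_{i,j}$ translate verbatim into the conditions defining $\mathcal{X}_\alpha$. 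So I would first establish the bijection between $X_\alpha^T$ and $\mathcal{X}_\alpha$.

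Next I would compute the dimension of the Bia\l{}ynicki-Birula cell attached to each fixed point $\ul{T}$, i.e. the dimension of the attracting set (or repelling set, depending on sign convention), which equals the number of positive weights of the $\CC^\times$-action on the tangent space $T_{\ul{T}}X_\alpha$. The natural way to get a handle on this tangent space is through the iterated Grassmannian-bundle description used in Proposition \ref{prop:smooth}: at each stage one chooses $W_{i,j}$ inside $W_{i-1,j}\cap W_{i,j+1}$ (intersected with the appropriate ambient subspace for boundary cases) containing nothing forced, so the relevant fiber is a Grassmannian whose tangent space at a coordinate point decomposes as $\Hom$ between the chosen subspace and its complement inside the ambient space. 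Concretely, the contribution of the $(i,j)$ stage should be governed by the ``new'' basis elements $T_{i,j}\setminus T_{i,j-1}$ sitting inside $T_{i-1,j}$, and counting positive weights amounts to counting inversions between $s\in T_{i,j}\setminus T_{i,j-1}$ and $t\in T_{i-1,j}\setminus T_{i,j}$ — which is exactly the quantity $d_{\ul{T}}^{i,j}$, with the boundary conventions $T_{i,i-1}=\varnothing$ and $T_{0,j}=\bigsqcup_{k=1}^{j+1}S_k$ matching the boundary conditions on the $W_{i,j}$. The global shift $-\sum_{1\le i<j\le n-1}a_ia_j$ accounts for the difference between the full fiber dimensions of the iterated bundle over $\til X_\alpha$ and the dimension of $X_\alpha$ itself (this is exactly the $U$-direction subtracted in Proposition \ref{prop:smooth}), ensuring the cell dimensions sum correctly and are nonnegative. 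Since $X_\alpha$ is smooth with only even-dimensional cells, the Bia\l{}ynicki-Birula theorem gives $H^{\op{odd}}(X_\alpha)=0$ and $\dim H^{2d}(X_\alpha)=\#\{\ul{T}:d_{\ul{T}}=d\}$, yielding the stated formula.

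I expect the main obstacle to be the tangent-space bookkeeping: one must correctly identify the $\CC^\times$-weights on $T_{\ul T}X_\alpha$ and verify that the positive-weight count is precisely $d_{\ul T}$ as defined, rather than some reindexed or off-by-one variant. The subtlety is that the cell of a fixed point in an iterated bundle is not simply the product of the cells in each fiber unless one is careful about how the base point's cell interacts with the fiber; here the clean way is to fix a dominant (or antidominant) cocharacter so that the weights are additive along the tower and the attracting cell decomposes as the product of the fiberwise attracting cells over the successive Grassmannian bundles. I would therefore choose the cocharacter so that elements of $S_j$ attract to elements of $S_i$ for $i<j$ exactly when there is an inversion $s>t$, and then carefully check the three boundary regimes (the top row $T_{1,j}$ constrained inside $\bigsqcup_{k\le j+1}S_k$, the right edge $T_{i,n-1}$ inside $\bigsqcup_{k\ge i}S_k$, and the empty/full conventions $T_{i,i-1}=\varnothing$, $T_{0,j}=\bigsqcup_{k\le j+1}S_k$) against the corresponding ambient-space constraints defining $X_\alpha$. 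Once the weight computation at a single fixed point is pinned down, the rest is a formal application of Bia\l{}ynicki-Birula together with the dimension count already verified in Proposition \ref{prop:smooth}.
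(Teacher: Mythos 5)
Your proposal follows essentially the same route as the paper's proof: the same maximal torus and identification of $X_\alpha^T$ with $\mathcal{X}_\alpha$, the same Bia\l{}ynicki-Birula decomposition for a generic cocharacter, the same computation of attracting-cell dimensions via the tangent-space filtration coming from the iterated Grassmannian bundle $\til{X}_\alpha$ (with the $(i,j)$ subquotient $\Hom\left(W_{i,j}/W_{i,j-1},\, W_{i-1,j}/W_{i,j}\right)$ yielding the inversion count $d_{\ul{T}}^{i,j}$), and the same use of the splitting $\til{X}_\alpha^\circ \cong X_\alpha\times U$ from Proposition \ref{prop:smooth} to produce the shift $-\sum_{1\le i<j\le n-1}a_ia_j$ (the paper combines $\dim\uf=\sum_{1\le i<j\le n}a_ia_j$ with the fully attracting $(i,n)$ subquotients contributing $\sum_{i=2}^n a_{i-1}a_n$, which is the bookkeeping your sketch compresses). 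The only slip is cosmetic: in the lexicographic tower one chooses $W_{i,j}$ containing the previously chosen $W_{i,j-1}$ inside $W_{i-1,j}$, not inside $W_{i-1,j}\cap W_{i,j+1}$, but this does not affect the argument.
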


\begin{proof}
    Pick a basis for each $V_i$ indexed by $S_i$ and let $T=(\CC^\times)^S \subseteq \prod_{i=1}^n \GL(V_i)$ be the corresponding maximal torus of diagonal matrices. Let $(\varepsilon_s)_{s\in S}$ be the standard basis for the weight lattice of $T$. Then it is clear that the $T$-fixed points of $X_\alpha$ are indexed by $\mathcal{X}_\alpha$: the fixed point corresponding to $\ul{T}$ is the array $(W_{i,j})$ where $W_{i,j}$ is the coordinate subspace spanned by the basis vectors indexed by elements of $T_{i,j}$. We denote by $p_{\ul{T}}$ this fixed point. If we choose a generic coweight $\chi: \CC^\times \to T$, we therefore obtain a Bia\l{}ynicki-Birula decomposition of $X_\alpha$ into affine spaces, from which it follows that
    \[P_{X_\alpha}(t) = \sum_{\ul{T}\in \mathcal{X}_\alpha} t^{2d'_{\ul{T}}},\]
    where $d'_{\ul{T}}$ is the dimension of the attracting set of $p_{\ul{T}}$, i.e. the number of weights $\lambda$ for the action of $T$ of $T_{p_{\ul{T}}}X_\alpha$ such that $\langle\lambda, \chi\rangle>0$, counted with multiplicities \cite{BB1, BB2}. It therefore remains to calculate $d_{\ul{T}}'$ and show that, for a suitable choice of coweight $\chi$, it equals $d_{\ul{T}}$.

    Let $\til{X}_\alpha$ be as in the proof of Proposition \ref{prop:smooth}. For every point \[p=(W_{i,j})_{1\le i\le j\le n} \in \til{X}_\alpha,\] by the construction of $\til{X}_\alpha$ as an iterated Grassannian bundle, we have a filtration of the tangent space $T_p\til{X}_\alpha$ whose subquotients are indexed by pairs $1\le i \le j \le n$, $(i,j)\ne (1,n)$. The subquotient indexed by $(i,j)$ is the tangent space at $\frac{W_{i,j}}{W_{i,j-1}}$ of the Grassmannian of $a_j$-dimensional subspaces of $\frac{W_{i-1,j}}{W_{i,j-1}}$, with the conventions that $W_{i,i-1}=\{0\}$ and $W_{0,j}=\bigoplus_{i=1}^{j+1} V_i$. Moreover, this tangent space can be canonically identified with $\Hom\left(\frac{W_{i,j}}{W_{i,j-1}}, \frac{W_{i-1,j}}{W_{i,j}}\right)$. We denote this particular subquotient by $T_p^{i,j}\til{X}_\alpha$. If $p=p_{\ul{T}}$ is a $T$-fixed point of $X_\alpha$, then this filtration is $T$-invariant and the isomorphism $T_{p_{\ul{T}}}^{i,j}\til{X}_\alpha\cong\Hom\left(\frac{W_{i,j}}{W_{i,j-1}}, \frac{W_{i-1,j}}{W_{i,j}}\right)$ is $T$-equivariant, so the weights of the $T$-action on $T_{p_{\ul{T}}}^{i,j}\til{X}_\alpha$ are given by $\e_t-\e_s$, as as we run through $s\in T_{i,j}\setminus T_{i,j-1}$ and $t\in T_{i-1,j}\setminus T_{i,j}$ (using the conventions $T_{i,i-1}=\varnothing$ and $T_{0,j}=\bigsqcup_{i=1}^{j+1} S_i$, as well as $T_{i,n}=\bigsqcup_{j=i}^n S_i$).

    If we choose $\chi$ so that $\langle \chi, \e_s\rangle > \langle \chi, \e_t\rangle$ whenever $s<t$, we obtain that the number of weights of $T_{p_{\ul{T}}}^{i,j}\til{X}_\alpha$ that pair positively with $\chi$ is given by $d_{\ul{T}}^{i,j}$, with $d_{\ul{T}}^{i,n}=\dim T_{p_{\ul{T}}}^{i,n}\til{X}_\alpha =a_{i-1}a_{n}$ as all weights of $T_{p_{\ul{T}}}^{i,n}\til{X}_\alpha$ pair positively with $\chi$. 
    
    Using the decomposition $\til{X}_\alpha^\circ \cong X_\alpha \times U$ from the proof of Proposition \ref{prop:smooth}, we have a $T$-equivariant decomposition $T_{p_{\ul{T}}} \til{X}_\alpha = T_{p_{\ul{T}}} X_\alpha \oplus \uf$, where $\uf$ is the Lie algebra of $U$. Note that all weights for the action of $T$ on $\uf$ pair positively with $\chi$ and $\dim \uf = \sum_{1\le i < j \le n} a_ia_j$, so we obtain from this decomposition that

    \begin{align*}
        d_{\ul{T}}' &= \sum_{1\le i \le j \le n-1} d_{\ul{T}}^{i,j} + \sum_{i=2}^n a_{i-1}a_n - \sum_{1\le i < j \le n} a_ia_j \\
        &= \sum_{1\le i \le j \le n-1} d_{\ul{T}}^{i,j} - \sum_{1\le i < j \le n-1} a_ia_j,
    \end{align*}
    which is our definition of $d_{\ul{T}}$.
    
\end{proof}

\begin{remark}
    The variety $X_\alpha$ can also be seen as a quiver Grassmannian. Specifically, let $Q$ be the quiver with vertices $v_{i,j}$ for $1 \le i \le j \le n-1$ and arrows $v_{i,j}\to v_{i,j+1}$ for $j<n-1$ and $v_{i,j}\to v_{i-1,j}$ for $i>1$. For example, for $n=5$:
\[\begin{tikzcd}
	&&& {v_{1,4}} \\
	{} && {v_{1,3}} && {v_{2,4}} \\
	& {v_{1,2}} && {v_{2,3}} && {v_{3,4}} \\
	{v_{1,1}} && {v_{2,2}} && {v_{3,3}} && {v_{4,4}}
	\arrow[from=2-3, to=1-4]
	\arrow[from=2-5, to=1-4]
	\arrow[from=3-2, to=2-3]
	\arrow[from=3-4, to=2-3]
	\arrow[from=3-4, to=2-5]
	\arrow[from=3-6, to=2-5]
	\arrow[from=4-1, to=3-2]
	\arrow[from=4-3, to=3-2]
	\arrow[from=4-3, to=3-4]
	\arrow[from=4-5, to=3-4]
	\arrow[from=4-5, to=3-6]
	\arrow[from=4-7, to=3-6]
\end{tikzcd}\]

Let $M_\alpha$ be the representation of $Q$ whose vector space at vertex $v_{i,j}$ is $\bigoplus_{k=i}^{j+1} V_k$ (where $V_k$ is a fixed $a_k$-dimensional vector space as in Definition \ref{def:X-variety}) and all arrows are the obvious inclusions. Note that we have a direct sum decomposition

\[ M_\alpha = \bigoplus_{k=1}^n V_k \otimes M_{\alpha_k} \cong \bigoplus_{k=1}^n M_{\alpha_k}^{\oplus a_k}. \]
Then it is clear that $X_\alpha$ is the quiver Grassmannian parametrizing subrepresentations of $M_\alpha$ of dimension vector $d_{ij} = \sum_{k=1}^j a_k$. 

This perspective might be useful as Euler characteristics of quiver Grassmannians have been studied quite extensively before, and are known to be related to the theory of cluster algebras \cite{CC,DWZ,CK}. For any representation $M$ of $Q$, we consider the polynomial
\[F_M = \sum_{\ul{d}} \chi(\op{Gr}_{\ul{d}}(M)) \ul{x}^{\ul{d}}\] in $\binom{n}{2}$ variables $x_{ij}$, $1\le i\le j\le n-1$, where the sum is over all dimension vectors $\ul{d}=(d_{ij})_{1\le i\le j\le n-1}$, $\op{Gr}_{\ul{d}}(M)$ is the quiver Grassmannian parametrizing subrepresentations of $M$ of a given dimension vector $\ul{d}$, and $\ul{x}^{\ul{d}}=\prod_{i,j} x_{ij}^{d_{ij}}$. Then we have the multiplicativity property $F_{M\oplus N}=F_M F_N$ \cite[Proposition 3.6]{CC}, from which it follows that
\[ F_{M_\alpha} =\prod_{k=1}^n F_{M_{\alpha_k}}^{a_k}.\]
Moreover, $F_{M_{\alpha_k}}$ is very easy to compute as every nonempty quiver Grassmannian for $M_{\alpha_k}$ is a single point. We can therefore recover $\chi(X_\alpha)$ as a particular coefficient in this large product. Unravelling what this coefficient is essentially recovers the combinatorial interpretation of Proposition \ref{prop:BB} specialized at $t=1$, i.e. $\chi(X_\alpha)=\#\mathcal{X}_\alpha$.

\end{remark}

As further evidence for Conjecture \ref{conj:poincare-poly}, we conclude this section by proving that it holds in the first nontrivial case, which is in type $A_3$.

\begin{proposition}
    Conjecture \ref{conj:poincare-poly} holds for $n=3$.
\end{proposition}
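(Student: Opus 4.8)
The plan is to verify Conjecture \ref{conj:poincare-poly} for $n=3$ by comparing two explicit polynomial expressions: the Poincar\'e polynomial $P_{X_\alpha}(t)$ computed via the Bia\l{}ynicki-Birula combinatorics of Proposition \ref{prop:BB}, and the numerator $(q)_\alpha^2\J_\alpha$ computed via the combinatorial formula of Theorem \ref{thm:combi-formula} (specifically its $A_3$ incarnation in Example \ref{ex:A3}). For $n=3$ the triangular arrays $\ul{T}=(T_{i,j})_{1\le i\le j\le 2}$ involve only three subsets $T_{1,1}, T_{2,2}, T_{1,2}$ subject to $T_{1,1}, T_{2,2}\subseteq T_{1,2}$ with the prescribed sizes and ambient constraints, so $\mathcal{X}_\alpha$ and the statistic $d_{\ul{T}}$ can be made completely explicit. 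The goal is to show that substituting $q=t^2$ makes the two generating functions coincide.

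First I would set $\alpha = a_1\alpha_1 + a_2\alpha_2 + a_3\alpha_3$ and unwind Proposition \ref{prop:BB} in this case. The middle subspace $T_{1,2}$ has size $a_1+a_2+a_3$ and, by the boundary constraints $T_{1,2}\subseteq S_1\sqcup S_2\sqcup S_3 = S$, must equal all of $S$; thus the only genuine choices are $T_{1,1}\subseteq S_1\sqcup S_2$ of size $a_1$ and $T_{2,2}\subseteq S_2\sqcup S_3$ of size $a_2$. Writing $i=\#(T_{1,1}\cap S_2)$ and $j=\#(T_{2,2}\cap S_2)$, one records how many of the chosen elements land in the ``overlap'' block $S_2$; the ranges are $0\le i\le \min(a_1,a_2)$ and $0\le j\le \min(a_2,a_3)$, matching the summation indices in Example \ref{ex:A3}. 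The next step is to evaluate $d_{\ul{T}}$ for such an array and sum $t^{2d_{\ul{T}}}$ over the arrays with fixed $(i,j)$; because the attracting-cell dimension decomposes over the inversion statistics $d_{\ul{T}}^{i,j}$, the count of arrays with prescribed block intersections and a prescribed number of inversions should produce exactly the product of Gaussian binomials $\qbinom{a_1}{i}_q\qbinom{a_2}{i}_q\qbinom{a_2}{j}_q\qbinom{a_3}{j}_q\qbinom{i+j}{i}_q$ together with the power $q^{i^2+j^2-ij}$ appearing in Example \ref{ex:A3}. The Gaussian binomials arise from counting inversions in choosing subsets of each block, and the cross term $\qbinom{i+j}{i}_q$ from the relative ordering of the $S_2$-elements selected by $T_{1,1}$ versus those selected by $T_{2,2}$ inside the common subquotient.

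The main obstacle will be this last bookkeeping: correctly matching the BB inversion statistic $d_{\ul{T}}=\sum_{i,j}d_{\ul{T}}^{i,j}-\sum_{i<j\le n-1}a_ia_j$ to the exponent $i^2+j^2-ij$ and distributing the five $q$-binomials among the correct $T$-subquotients, keeping track of the convention-laden shift terms and the $T_{0,j}$, $T_{i,n}$ boundary conventions. I would handle this by grouping the inversion contributions block-by-block — within $S_1$, within $S_2$, within $S_3$, and across the $S_2$ overlap — and verifying each group against the corresponding $q$-binomial via the standard fact that the number of $b$-subsets of an $a$-set with a given inversion count is the coefficient of the Gaussian binomial $\qbinom{a}{b}_q$. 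Once the per-$(i,j)$ generating function is shown to equal $q^{i^2+j^2-ij}\qbinom{a_1}{i}_q\qbinom{a_2}{i}_q\qbinom{a_2}{j}_q\qbinom{a_3}{j}_q\qbinom{i+j}{i}_q|_{q=t^2}$, summing over $i$ and $j$ gives $(q)_\alpha^2\J_\alpha|_{q=t^2}$ by Example \ref{ex:A3}, and since both sides are polynomials in $t^2$ (Proposition \ref{prop:BB} and the vanishing of odd cohomology) the identity $P_{X_\alpha}(t)=(q)_\alpha^2\J_\alpha|_{q=t^2}$ follows, completing the $n=3$ case.
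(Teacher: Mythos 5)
Your reduction collapses at its very first step because of a miscount of $\#T_{1,2}$. In Proposition \ref{prop:BB} the size constraint is $\#T_{i,j}=\sum_{i\le k\le j}a_k$, so for $n=3$ the middle set satisfies $\#T_{1,2}=a_1+a_2$, \emph{not} $a_1+a_2+a_3$ (correspondingly, $W_{1,2}$ is a varying subspace of $V$ of dimension $a_1+a_2$, not all of $V$). Hence $T_{1,2}$ is not forced to equal $S$, and the fixed-point set is strictly larger than the set of pairs $(T_{1,1},T_{2,2})$ you describe. With $T_{1,2}$ frozen, your model is just the product $G(a_1,a_1+a_2)\times G(a_2,a_2+a_3)$, and the numbers show this cannot work: for $\alpha=\alpha_1+\alpha_2+\alpha_3$ your model has $\binom{2}{1}\binom{2}{1}=4$ fixed points, whereas $\#\mathcal{X}_\alpha=\chi(X_\alpha)=5$ (the third Catalan number, consistent with Remark \ref{rmk:narayana}), and $(q)_\alpha^2\J_\alpha$ is the Narayana polynomial $1+3q+q^2$ rather than $(1+q)^2$. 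The per-$(i,j)$ identity you propose also fails directly: at $a_1=a_2=a_3=1$ and $i=j=1$ there is exactly one pair $(T_{1,1},T_{2,2})$ in your model, yet the claimed contribution $q\,\qbinom{1}{1}_q^4\qbinom{2}{1}_q=q+q^2$ accounts for two cells. (A smaller sign of the same confusion: with your definition $j=\#(T_{2,2}\cap S_2)$, the natural range is $a_2-a_3\le j\le a_2$, not $0\le j\le\min(a_2,a_3)$.)

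Because $T_{1,2}$ genuinely varies, the correct computation needs a third summation parameter, and this is exactly how the paper proceeds: it parametrizes fixed points by $i=\#(T_{1,1}\cap S_2)$, $j=\#(T_{1,2}\cap S_3)$ and $k=\#(S_1\setminus T_{1,2})$, evaluates $d_{\ul{T}}$ block by block to obtain the triple sum $\sum_{i,j,k} \qbinom{a_1}{i}_q\qbinom{a_2}{i}_q \qbinom{a_3}{j}_q \qbinom{i}{k}_q \qbinom{a_2-i}{j-k}_q \qbinom{a_2+k}{a_2}_q q^{i^2+j^2-ik-jk+k^2}$, and then collapses the sum over $k$ via the nontrivial $q$-binomial identity of Lemma \ref{lem:q-nanjundiah} (a $q$-analog of Nanjundiah's identity, proved by three applications of $q$-Vandermonde) to land on the double sum of Example \ref{ex:A3}. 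Your block-by-block inversion-counting strategy is the right instinct---it is how the paper computes each $d_{\ul{T}}^{i,j}$---but without the extra parameter and the Nanjundiah-type collapse the target formula is unreachable, so as written the proof has a fatal gap.
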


\begin{proof}
    By Proposition \ref{prop:BB} specialized to $n=3$, we have
    \[ P_{X_\alpha}(t) = \sum_{\ul{T}\in \mathcal{X}_\alpha} (t^2)^{d_{\ul{T}}^{1,1}+d_{\ul{T}}^{1,2}+d_{\ul{T}}^{2,2}-a_1a_2},\]
    where the sum is over triples $(T_{1,1},T_{1,2},T_{2,2})$ with $\#T_{1,1}=a_1$, $\#T_{2,2}=a_2$, $\#T_{1,2}=a_1+a_2$ and $S_1 \sqcup S_2 \supseteq T_{1,1} \subseteq T_{1,2} \supseteq T_{2,2} \subseteq S_2\sqcup S_3$. We count the triples $(T_{1,1},T_{1,2},T_{2,2})$ according to $i=\#(T_{1,1}\cap S_2)$, $j=\#(T_{1,2}\cap S_3)$ and $k=\#(S_1\setminus T_{1,2})$, which yields
    
    \begin{align*}
        &\sum_{\ul{T}\in \mathcal{X}_\alpha} q^{d_{\ul{T}}^{1,1}+d_{\ul{T}}^{1,2}+d_{\ul{T}}^{2,2}-a_1a_2} \\
        &= \sum_{i,j,k} \left(\qbinom{a_1}{i}_q\qbinom{a_2}{i}_q q^{i^2} \right) \left(\qbinom{a_3}{j}_q \qbinom{i}{k}_q \qbinom{a_2-i}{j-k}_q q^{k(a_2-i-j+k)+kj+(j-k)j}\right) \\ &\hspace{40pt}\left(\qbinom{a_2+k}{a_2}_q q^{(a_1-k)a_2} \right)q^{-a_1a_2}\\
        &= \sum_{i,j,k} \qbinom{a_1}{i}_q\qbinom{a_2}{i}_q \qbinom{a_3}{j}_q \qbinom{i}{k}_q \qbinom{a_2-i}{j-k}_q \qbinom{a_2+k}{a_2}_q q^{i^2+j^2-ik-jk+k^2},
    \end{align*}
    where the sum is over $0\le i\le a_1, a_2$, $0\le j \le a_2, a_3$, and $0, i+j-a_2\le k \le i,j$. Here $\qbinom{a_1}{i}_q\qbinom{a_2}{i}_q q^{i^2}$ accounts for the sum of $q^{d_{\ul{T}}^{1,1}}$ over all possible choices of $T_{1,1}$, $\qbinom{a_3}{j}_q \qbinom{i}{k}_q \qbinom{a_2-i}{j-k}_q q^{k(a_2-i-j+k)+kj+(j-k)j}$ accounts for the sum of $q^{d_{\ul{T}}^{1,2}}$ over all possible choices of $T_{1,2}$ (with $T_{1,1}$ fixed) and $\qbinom{a_2+k}{a_2}_q q^{(a_1-k)a_2}$ accounts for the sum of $q^{d_{\ul{T}}^{2,2}}$ over all possible choices of $T_{2,2}$ (with $T_{1,1}$ and $T_{1,2}$ fixed). We can simplify the sum over $k$ by a sequence of applications of the $q$-Vandermonde identity. We isolate this calculation as Lemma \ref{lem:q-nanjundiah} below. Applying the lemma yields
    \begin{align*} \sum_{i,j,k} \qbinom{a_1}{i}_q\qbinom{a_2}{i}_q \qbinom{a_3}{j}_q \qbinom{i}{k}_q \qbinom{a_2-i}{j-k}_q \qbinom{a_2+k}{a_2}_q q^{i^2+j^2-ik-jk+k^2} \\
    = \sum_{i,j} \qbinom{a_1}{i}_q\qbinom{a_2}{i}_q \qbinom{a_2}{j}_q  \qbinom{a_3}{j}_q \qbinom{i+j}{j}_q q^{i^2+j^2-ij},
    \end{align*}
    which is exactly $(q)_\alpha^2\J_\alpha$ by Example \ref{ex:A3}.
\end{proof}

\begin{lemma} \label{lem:q-nanjundiah}
    For every integers $0\le i,j \le a$ we have the identity 
    \[\sum_{k} \qbinom{i}{k}_q \qbinom{a-i}{j-k}_q \qbinom{a+k}{a}_q q^{(i-k)(j-k)} = \qbinom{a}{j}_q \qbinom{i+j}{j}_q\]
\end{lemma}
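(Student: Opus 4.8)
The plan is to reduce the three-factor sum on the left to the two-factor product on the right by spending one application of the $q$-Vandermonde identity per "unwanted" factor. The immediate obstruction to applying $q$-Vandermonde directly is the factor $\qbinom{a+k}{a}_q$: without it, the remaining sum $\sum_k \qbinom{i}{k}_q \qbinom{a-i}{j-k}_q q^{(i-k)(j-k)}$ already collapses to $\qbinom{a}{j}_q$ by the standard form of $q$-Vandermonde. So the first move is to expand this offending factor itself as a $q$-Vandermonde sum. Writing $\qbinom{a+k}{a}_q = \sum_r \qbinom{k}{r}_q \qbinom{a}{r}_q q^{r^2}$ (the form of $q$-Vandermonde whose $q$-exponent is $r^2$, rather than $(a-r)(k-r)$ --- this choice is essential, see below), interchanging the order of summation, and using the subset-of-a-subset identity $\qbinom{i}{k}_q\qbinom{k}{r}_q = \qbinom{i}{r}_q\qbinom{i-r}{k-r}_q$ to re-bracket, I would pull $\qbinom{a}{r}_q\qbinom{i}{r}_q q^{r^2}$ outside and be left with an inner sum over $k$.

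After the substitution $k = r+s$ the inner sum becomes $\sum_s \qbinom{i-r}{s}_q \qbinom{a-i}{(j-r)-s}_q q^{((i-r)-s)((j-r)-s)}$, which is exactly the standard $q$-Vandermonde with upper parameters $i-r$ and $a-i$ and lower parameter $j-r$; it evaluates to $\qbinom{a-r}{j-r}_q$. At this point the whole expression has become $\sum_r \qbinom{a}{r}_q \qbinom{i}{r}_q \qbinom{a-r}{j-r}_q q^{r^2}$. A second use of the subset-of-a-subset identity, now in the form $\qbinom{a}{r}_q\qbinom{a-r}{j-r}_q = \qbinom{a}{j}_q\qbinom{j}{r}_q$, factors out $\qbinom{a}{j}_q$ and leaves $\qbinom{a}{j}_q \sum_r \qbinom{i}{r}_q\qbinom{j}{r}_q q^{r^2}$. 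The remaining sum is once more a $q$-Vandermonde (the same $r^2$-form as in the first step, with parameters $i$, $j$, $j$) and equals $\qbinom{i+j}{j}_q$, giving the desired right-hand side.

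The step I expect to be the crux is the very first one: choosing the correct of the two forms of the $q$-Vandermonde identity when expanding $\qbinom{a+k}{a}_q$. If one instead uses the form with $q$-exponent $(a-r)(k-r)$, the substitution $k=r+s$ produces a stray factor $q^{s(a-r)}$ in the inner sum, which spoils the exponent $((i-r)-s)((j-r)-s)$ needed for the clean collapse in the second paragraph. Making every $q$-power telescope into the shape demanded by $q$-Vandermonde is really the whole game here; once the $r^2$-form is selected at the outset, the three applications of $q$-Vandermonde (interspersed with the two elementary subset-of-a-subset rearrangements) fit together with no residual powers, and the remaining bookkeeping is routine.
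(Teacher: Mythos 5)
Your proposal is correct and follows essentially the same route as the paper's own proof: both expand $\qbinom{a+k}{a}_q$ via the $q^{r^2}$-form of the $q$-Vandermonde identity, rebracket twice with the subset-of-a-subset identity, collapse the inner sum over $k$ to $\qbinom{a-r}{j-r}_q$, and finish with a final $q^{r^2}$-form $q$-Vandermonde to produce $\qbinom{i+j}{j}_q$. Even your identified crux --- selecting the $r^2$-form rather than the $(a-r)(k-r)$-form at the first step --- matches the structure of the paper's argument exactly.
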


\begin{proof}
    This identity can be seen as a $q$-analog of a special case of a binomial identity of Nanjundiah \cite[Equation (4)]{Nan}, and our proof is directly inspired by the proof in \cite{Nan}. We have
    \begin{align*}
        &\sum_{k} \qbinom{i}{k}_q \qbinom{a-i}{j-k}_q \qbinom{a+k}{a}_q q^{(i-k)(j-k)} \\
        &= \sum_{k,\ell} \qbinom{i}{k}_q \qbinom{a-i}{j-k}_q \qbinom{a}{\ell}_q \qbinom{k}{\ell}_q q^{\ell^2+(i-k)(j-k)} \\
        &= \sum_{k,\ell} \qbinom{i-\ell}{k-\ell}_q \qbinom{a-i}{j-k}_q \qbinom{a}{\ell}_q \qbinom{i}{\ell}_q q^{\ell^2+(i-k)(j-k)} \\
        &= \sum_{\ell} \qbinom{a-\ell}{j-\ell}_q  \qbinom{a}{\ell}_q \qbinom{i}{\ell}_q q^{\ell^2} \\
        &= \sum_{\ell} \qbinom{a}{j}_q  \qbinom{j}{\ell}_q \qbinom{i}{\ell}_q q^{\ell^2} \\
        &=\qbinom{a}{j}_q \qbinom{i+j}{j}_q, \\
    \end{align*}
    where we applied the $q$-Vandermonde identity \cite[Exercise 1.100]{Sta2} on the first, third and fifth lines.
\end{proof}

\section{Conclusion and open problems}

The proof of Theorem \ref{thm:denominator} relies on the representation-theoretic interpretation of $\J_\alpha$. It would be interesting to have a more elementary proof using the fermionic recursion directly (which we do not know how to do even in the $q\to 1$ limit of Remark \ref{rmk:K-limit}). It would also be very interesting to give a geometric explanation for the denominator, using the interpretation from Section \ref{sec:geo-interpretation}. Note that one factor of $(q)_\alpha$ in the denominator comes from the Hilbert series of $\bb{A}^\alpha$ (see Remark \ref{rmk:central-fiber}), but the other factor of $(q)_\alpha$ is not explained. The following conjecture would give a geometric proof of Theorem \ref{thm:denominator} if proven.

\begin{conjecture} \label{conj:regular-sequence}
    For every $\alpha\in Q^\pos$, there is a regular sequence \[(a_{i,j})_{1\le i \le n, 1\le j \le a_i}\] in $\CC[Z^\alpha_0]$ such that $a_{i,j}$ is homogeneous of degree $j$.
\end{conjecture}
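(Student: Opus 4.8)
The plan is to reduce the conjecture to the existence of a homogeneous system of parameters of prescribed degrees on the central fibre $Z^\alpha_0$, and then to construct such a system.

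First I would establish that $\CC[Z^\alpha_0]$ is Cohen--Macaulay. By \cite{BF} the Zastava space $Z^\alpha$ is Gorenstein, hence Cohen--Macaulay, and the factorization map $Z^\alpha \to \bb{A}^\alpha$ is flat with target isomorphic to an affine space, hence regular. Since the fibres of a flat morphism whose source is Cohen--Macaulay and whose target is regular are themselves Cohen--Macaulay (by the additivity of depth and dimension under flat local homomorphisms), the central fibre $Z^\alpha_0$ is Cohen--Macaulay; as $Z^\alpha$ is irreducible and the map is flat, $\dim Z^\alpha_0 = 2|\alpha| - |\alpha| = |\alpha|$. The loop-rotation $\CC^\times$-action contracts $Z^\alpha_0$ to the cone point, so $\CC[Z^\alpha_0]$ is nonnegatively graded with $\CC[Z^\alpha_0]_0 = \CC$, and for such a ring every homogeneous system of parameters is a regular sequence. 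A regular sequence $(a_{i,j})$ as in the conjecture has total length $\sum_i a_i = |\alpha| = \dim Z^\alpha_0$, so it is precisely a homogeneous system of parameters. The conjecture is therefore equivalent to the assertion that $\CC[Z^\alpha_0]$ admits a homogeneous system of parameters with $a_i$ members of degrees $1, 2, \ldots, a_i$ for each $i$. (Quotienting by such a sequence multiplies the Hilbert series $(q)_\alpha \J_\alpha$ of $Z^\alpha_0$, computed in Remark \ref{rmk:central-fiber}, by $(q)_\alpha$, yielding the polynomial $(q)_\alpha^2 \J_\alpha$ as the Hilbert series of a finite-dimensional ring; this simultaneously reproves Theorem \ref{thm:denominator} and the positivity of Conjecture \ref{conj:pos-coeffs}.)

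The prescribed degrees are natural: the elementary symmetric coordinates on the $i$-th factor of $\bb{A}^\alpha = \prod_i(\bb{A}^{a_i}/S_{a_i})$ have degrees $1, \ldots, a_i$ and, pulled back along the flat factorization map, already form a regular sequence in $\CC[Z^\alpha]$ whose quotient is $\CC[Z^\alpha_0]$; the conjecture asks for a \emph{second} regular sequence, with the same degree profile, living inside $\CC[Z^\alpha_0]$ itself. To produce it I would seek a flat, $\CC^\times$-equivariant degeneration of $\CC[Z^\alpha_0]$ to a combinatorial ring --- an affine semigroup ring or Stanley--Reisner-type ring arising as an associated graded --- in which parameters of degrees $1, \ldots, a_i$ are visible, and then lift by semicontinuity: if the special-fibre forms cut out only the cone point, then suitable lifts do the same on the general fibre, since the locus where the common zero set acquires positive dimension is closed and avoids the special point. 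The monopole formula (\ref{eq:monopole}), which writes the Hilbert series as a positive sum of affine-space contributions indexed by multipartitions, is strong evidence that such a combinatorial model exists; identifying it explicitly, and reading off parameters of the required degrees, would finish the proof.

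The step I expect to be hardest is controlling the \emph{small} prescribed degrees. For a Cohen--Macaulay graded ring of dimension $m$, a homogeneous system of parameters of prescribed degrees $d_1, \ldots, d_m$ exists only if at each stage some form of the required degree lies outside every positive-dimensional component of the current vanishing locus; equivalently, there must be no positive-dimensional subvariety of $Z^\alpha_0$ on which all functions of that degree vanish. The binding constraints occur in low degree --- one needs one degree-one form for each $i$ in the support of $\alpha$ to drop the dimension accordingly, then degree-two forms, and so on --- and ruling out such ``bad'' base loci uniformly in $\alpha$ is the crux. I expect this to require genuine input about the geometry of $Z^\alpha_0 \cong S^-_0 \cap \overline{S_\alpha}$: either the explicit degeneration above, or a direct analysis of the $T \times \CC^\times$-equivariant structure using the torus-fixed-point data to show that the degree-$j$ piece of $\CC[Z^\alpha_0]$ separates the relevant strata for each $j \le \max_i a_i$. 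Establishing this base-point-freeness in each prescribed degree is the essential difficulty.
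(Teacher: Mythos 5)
Your attempt addresses a statement that the paper itself presents as an open conjecture: there is no proof of Conjecture \ref{conj:regular-sequence} in the paper, so the only thing to compare against is the surrounding discussion in the final section. Your preliminary reductions are correct and essentially reproduce that discussion. Indeed: $Z^\alpha$ is Gorenstein, hence Cohen--Macaulay, by \cite{BF}; flatness of the factorization map gives $\dim Z^\alpha_0=|\alpha|$ and Cohen--Macaulayness of the central fiber; and for a nonnegatively graded Cohen--Macaulay ring with degree-zero part $\CC$, a homogeneous system of parameters is the same as a maximal homogeneous regular sequence, so the conjecture is equivalent to the existence of an hsop with $a_i$ members of degrees $1,\ldots,a_i$. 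The paper already records precisely this reduction (``there does exist a regular sequence with the right number of elements\ldots but the difficult part is getting the right degrees'') together with the consequence you note, namely that the conjecture would reprove Theorem \ref{thm:denominator} and imply Conjecture \ref{conj:pos-coeffs}, via the Hilbert series $(q)_\alpha\J_\alpha$ of $Z^\alpha_0$ from Remark \ref{rmk:central-fiber}. So up to this point your write-up is sound but adds nothing beyond what the paper states.

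The genuine gap is that the proposal stops exactly where the conjecture begins. The construction step --- a flat $\CC^\times$-equivariant degeneration of $\CC[Z^\alpha_0]$ to a semigroup or Stanley--Reisner-type ring in which parameters of degrees $1,\ldots,a_i$ are visible, followed by lifting via semicontinuity --- is a reasonable strategy, and the lifting half is standard (if the special fiber of a graded flat family admits an hsop of prescribed degrees, so does the general fiber). But no candidate degeneration is identified, and the one piece of evidence you invoke does not supply one: the monopole formula is an identity of Hilbert series, a sum of rational contributions indexed by multipartitions, and it carries no ring-theoretic or initial-ideal structure from which forms of the required degrees could be read off. You also correctly isolate the crux --- that no positive-dimensional subvariety of $Z^\alpha_0\cong S^-_0\cap\overline{S_\alpha}$ may lie in the common zero locus of each prescribed graded piece --- but give no argument toward it, and this base-point-freeness in low degrees is the entire content of the conjecture. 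As submitted, then, this is a correct reformulation plus an honest research plan, not a proof; the statement remains open in the paper as well.
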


Note that there does exist a regular sequence with the right number of elements (as Zastava spaces are known to be Cohen-Macaulay \cite[Theorem 1.2]{BF}), but the difficult part is getting the right degrees. Conjecture \ref{conj:regular-sequence} would in fact imply the following stronger form of Theorem \ref{thm:denominator}, asserting that the numerator has positive coefficients:

\begin{conjecture} \label{conj:pos-coeffs}
    For every $\alpha\in Q^\pos$, we have
    \[ \J_\alpha \in \frac{1}{(q)_\alpha^2}\NN[q].\]
\end{conjecture}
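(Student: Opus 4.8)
The plan is to treat the type $A$ case and the general case separately: in type $A$ a complete proof is available directly from the explicit formula of Theorem \ref{thm:combi-formula}, while for arbitrary type I would reduce the statement to the geometric Conjecture \ref{conj:regular-sequence}.

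In type $A$, I would read off positivity straight from Theorem \ref{thm:combi-formula}, which expresses the numerator as $(q)_\alpha^2\J_\alpha = \sum_{\underline{m}} q^{D(\underline{m})}\prod_{k,i} \qbinom{m_{k+1,i}}{m_{k,i}}_q \qbinom{m_{k+1,i+1}}{m_{k,i}}_q$. Each Gaussian binomial coefficient already lies in $\NN[q]$, so the whole sum lies in $\NN[q]$ as soon as every exponent $D(\underline{m})$ is nonnegative. To see this, I would note that $D(\underline{m})$ is a sum over the rows $k$ of the quadratic form $\sum_{i=1}^k x_i^2 - \sum_{i=1}^{k-1} x_i x_{i+1}$ evaluated at $(m_{k,1},\ldots,m_{k,k})$, and that this form equals $\frac{1}{2}\bigl(x_1^2 + x_k^2 + \sum_{i=1}^{k-1}(x_i-x_{i+1})^2\bigr)$, hence is nonnegative (it is, up to scaling, the positive-definite $A_k$ form). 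Thus each summand is in $\NN[q]$ and positivity follows in type $A$.

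For a type-independent argument I would instead pass to the geometric interpretation of Section \ref{sec:geo-interpretation} (in the simply-laced case; the non-simply-laced case would require the twisted Zastava spaces). Granting Conjecture \ref{conj:regular-sequence}, the deduction is short: by Remark \ref{rmk:central-fiber} the graded ring $\CC[Z^\alpha_0]$ has Hilbert series $(q)_\alpha\J_\alpha$, and quotienting any nonnegatively graded ring by a homogeneous regular sequence whose elements have degrees $j$, ranging over $1\le i\le n$ and $1\le j\le a_i$, multiplies its Hilbert series by $\prod_{i=1}^n\prod_{j=1}^{a_i}(1-q^{j})$. In the simply-laced case this product is exactly $(q)_\alpha$, so the resulting quotient ring has Hilbert series $(q)_\alpha^2\J_\alpha$; being a genuine nonnegatively graded algebra, its Hilbert series has nonnegative coefficients, which is precisely the assertion.

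The main obstacle is the construction of the regular sequence with the prescribed degrees. As the text already observes, a regular sequence of the correct length exists because the Zastava space is Cohen--Macaulay, but forcing the degrees to be $1,2,\ldots,a_i$ for each $i$ is the delicate point; I would attempt to produce these elements explicitly, for instance from the factorization morphism $Z^\alpha\to\bb{A}^\alpha$ or from the Coulomb-branch presentation used in the proof of Theorem \ref{thm:geo-interpretation}. Extending the clean type $A$ argument in place of this would instead require a manifestly positive combinatorial formula for the numerator beyond type $A$, which is not presently available.
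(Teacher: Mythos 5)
Your proposal matches the paper's own treatment of this statement exactly: the paper establishes positivity only in type $A$, as an immediate consequence of Theorem \ref{thm:combi-formula} (your verification that $D(\underline{m})\ge 0$ via the identity $\sum_i x_i^2-\sum_i x_ix_{i+1}=\frac{1}{2}\bigl(x_1^2+x_k^2+\sum_i(x_i-x_{i+1})^2\bigr)$ is correct, though the paper leaves this implicit), and outside type $A$ the statement remains open, with the paper observing --- just as you do --- that it would follow from Conjecture \ref{conj:regular-sequence}, since quotienting $\CC[Z^\alpha_0]$ (whose Hilbert series is $(q)_\alpha\J_\alpha$ by Remark \ref{rmk:central-fiber}) by a homogeneous regular sequence with the prescribed degrees multiplies the Hilbert series by $(q)_\alpha$, yielding a graded algebra with Hilbert series $(q)_\alpha^2\J_\alpha$. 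Your conditional reduction is sound, and your identification of the degree constraint on the regular sequence as the genuine obstacle is precisely the point the paper itself flags as unresolved.
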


Conjecture  \ref{conj:pos-coeffs} holds in type A by Theorem \ref{thm:combi-formula}, but we do not know a proof in the general case. From empirical observations, we in fact believe that the polynomial $(q)_\alpha^2\J_\alpha$ not only has positive coefficients, but is a unimodal polynomial.

\begin{conjecture} \label{conj:unimodal}
    The polynomial $(q)_\alpha^2\J_\alpha$ is unimodal.
\end{conjecture}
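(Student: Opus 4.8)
The cleanest route is geometric, via the hard Lefschetz theorem, and in type $A$ it reduces the problem entirely to Conjecture \ref{conj:poincare-poly}. Indeed, by Proposition \ref{prop:smooth} the variety $X_\alpha$ is smooth and projective, and by the proof of Proposition \ref{prop:BB} it admits a Bia\l{}ynicki-Birula decomposition into affine cells, so it has vanishing odd cohomology. Hard Lefschetz then guarantees that cup product with an ample class induces injections $H^{2i}(X_\alpha) \hookrightarrow H^{2i+2}(X_\alpha)$ for $2i < \dim_\CC X_\alpha$, whence the even Betti numbers of $X_\alpha$ are unimodal and $P_{X_\alpha}(t)$ is a unimodal polynomial in $t^2$. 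Thus, granting Conjecture \ref{conj:poincare-poly}, unimodality of $(q)_\alpha^2\J_\alpha$ is immediate in type $A$, and the plan is to prove that conjecture in general rather than only for $n=3$.

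To prove Conjecture \ref{conj:poincare-poly} I would combine the formula of Theorem \ref{thm:combi-formula} with the fixed-point description of $P_{X_\alpha}$ from Proposition \ref{prop:BB}, reducing it to an identity between two explicit generating functions over triangular arrays. The natural attack is induction on $n$: the recursion (\ref{eq:A-recursion}) for $H_\alpha$ should be matched with a recursive fibration-type structure on $X_\alpha$ obtained by forgetting one diagonal of the array of subspaces, so that the weights $\qbinom{a_j}{b_j}_q\qbinom{a_{j+1}}{b_j}_q q^{\frac{(\beta,\beta)}{2}}$ appear as Poincaré polynomials of the fibers. Carrying this out amounts to summing the cell dimensions $d_{\underline{T}}$ along the forgotten diagonal, exactly the manipulation performed for $n=3$ in Lemma \ref{lem:q-nanjundiah} using $q$-Vandermonde. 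The main obstacle is that for general $n$ this collapses to a multivariate $q$-summation identity strictly generalizing Nanjundiah's, and finding and proving the right such identity — equivalently, the right recursive geometric decomposition of $X_\alpha$ — is the crux.

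For general non-type-$A$ root systems there is no candidate smooth projective variety on hand, and a different mechanism is needed. One option is to construct such a variety as the fiber over the most singular point of a semismall resolution of the central fiber $Z^\alpha_0$ of the factorization map (whose Hilbert series is $(q)_\alpha\J_\alpha$ by Remark \ref{rmk:central-fiber}), with purity and hard Lefschetz on the resolution as the engine; in type $A$ this is expected to recover $X_\alpha$ as a fiber of Laumon's resolution. A second, more algebraic option is to prove Conjecture \ref{conj:regular-sequence}: a homogeneous regular sequence of the prescribed degrees would exhibit $(q)_\alpha^2\J_\alpha$ as the Hilbert series of an Artinian Gorenstein reduction of $\CC[Z^\alpha_0]$, and unimodality would follow from a weak Lefschetz property for that algebra. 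Palindromicity is already known (Remark \ref{rmk:palindromic}, via Gorensteinness and Stanley's theorem), so what is genuinely missing in both options is a Lefschetz element.

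Finally, I expect a purely combinatorial approach through Theorem \ref{thm:combi-formula} to be difficult: although each summand $q^{D(\underline{m})}\prod \qbinom{\cdot}{\cdot}_q$ is a product of symmetric unimodal Gaussian binomials, hence symmetric unimodal, the summands are centered at different points (already visible in Example \ref{ex:A2}), so they cannot simply be added. The main obstacle, across all approaches, is therefore the same, namely producing the Lefschetz structure — either the geometric identity of Conjecture \ref{conj:poincare-poly} in type $A$, or a smooth projective model (or Lefschetz element) in general — since palindromicity together with positive coefficients does not by itself force unimodality.
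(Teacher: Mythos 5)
There is a genuine gap, though it is one you yourself flag: what you have written is a research plan, not a proof. The statement in question is left open in the paper as well --- it is stated as a conjecture, and the paper's entire discussion of it consists of exactly your first reduction, namely that Conjecture \ref{conj:unimodal} would follow in type $A$ from Conjecture \ref{conj:poincare-poly} by hard Lefschetz (since $X_\alpha$ is smooth projective by Proposition \ref{prop:smooth} with vanishing odd cohomology by the Bia\l{}ynicki-Birula decomposition of Proposition \ref{prop:BB}), and that outside type $A$ one would need a smooth projective model that is not currently available. So your first paragraph reproduces the paper's own conditional observation, and everything after it is unexecuted: the proposed induction on $n$ matching the recursion (\ref{eq:A-recursion}) with a fibration-type decomposition of $X_\alpha$ is not carried out, and the ``multivariate $q$-summation identity strictly generalizing Nanjundiah's'' --- which you correctly identify as the crux --- is neither exhibited nor proven. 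The paper itself only establishes Conjecture \ref{conj:poincare-poly} for $n\le 3$ (via Lemma \ref{lem:q-nanjundiah}), so for $n\ge 4$ in type $A$, and in all other types, unimodality remains fully open under your proposal. One should also note that a cell decomposition by itself does not force the fibration structure you want: forgetting a diagonal of the array $(W_{i,j})$ need not give a Zariski-locally trivial fibration with fibers of constant Poincar\'e polynomial, and making this precise is part of the missing work, not a routine step.

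Two smaller points. First, a minor indexing slip: hard Lefschetz gives injections $H^{2i}(X_\alpha)\hookrightarrow H^{2i+2}(X_\alpha)$ for $2i < \dim_{\mathbb{R}} X_\alpha$, i.e.\ $i < \dim_{\CC} X_\alpha$, not ``$2i < \dim_\CC X_\alpha$''; the conclusion (unimodality of the even Betti numbers) is unaffected. Second, your general-type alternatives are also conditional on open inputs: the route through Conjecture \ref{conj:regular-sequence} plus a weak Lefschetz property would require proving both the existence of the homogeneous regular sequence in the prescribed degrees (open, per the paper) and the Lefschetz property for the resulting Artinian Gorenstein quotient; and even positivity of the coefficients (Conjecture \ref{conj:pos-coeffs}) is only known in type $A$ via Theorem \ref{thm:combi-formula}. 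Your closing diagnosis is sound --- palindromicity (Remark \ref{rmk:palindromic}) plus positivity does not imply unimodality, and the genuinely missing ingredient in every approach is a Lefschetz structure --- but identifying the missing ingredient is not the same as supplying it, so this cannot be accepted as a proof of the statement.
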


Conjecture \ref{conj:unimodal} would follow from Conjecture \ref{conj:poincare-poly} in type A by the Hard Lefschetz theorem. Outside type $A$, unimodality might also be explained by the existence of a smooth variety with $(q)_\alpha^2\J_\alpha$ as its Poincaré polynomial, though this would require further investigations as we do not currently have a candidate for such a variety in general.

We would also like to see a more conceptual proof of the formula from Theorem \ref{thm:combi-formula}, using either the geometric or representation-theoretic interpretation of $\J_\alpha$. For example, if one could find a regular sequence $(a_{i,j})$ as in Conjecture \ref{conj:regular-sequence} and a nice basis for the finite-dimensional graded algebra $\CC[Z^\alpha_0]/(a_{i,j})$ of a combinatorial flavor, it could yield a new proof of Theorem \ref{thm:combi-formula}. One reason why a more conceptual proof is desirable is that it might lead to combinatorial formulas for $(q)_\alpha^2\J_\alpha$ similar to (\ref{eq:main-formula}) in types other than type A.

Finally, let us mention that, assuming Conjecture \ref{conj:regular-sequence}, both sides of Conjecture \ref{conj:poincare-poly} are the generating polynomials for the graded dimension of some finite-dimensional graded algebra: the first is $H^\bullet(X_\alpha)$ and the second is the quotient $\CC[Z^\alpha_0]/(a_{i,j})$. One might therefore hope that, at least for a suitable choice of regular sequence $(a_{i,j})$, Conjecture \ref{conj:poincare-poly} is the combinatorial shadow of an isomorphism of graded algebras $H^\bullet(X_\alpha)\cong\CC[Z^\alpha_0]/(a_{i,j})$. This hypothetical isomorphism is quite reminiscent of Hikita's conjecture, which predicts an isomorphism between the cohomology ring of a symplectic resolution and a certain finite dimensional quotient of the coordinate ring of its symplectic dual \cite{Hik} \cite[Section 5.6]{Kam}. While our study case does not fit directly in Hikita's setting (as Zastava spaces do not have symplectic resolutions), the similarity is striking enough to wonder whether there is a hidden relationship.

\section{Acknowledgements}

The author would like to thank his advisor Joel Kamnitzer for his guidance and comments on the paper, as well as Dave Anderson, Leonid Rybnikov, Vasily Krylov, Alexander Braverman, Evgeny Feigin and Gabe Udell for interesting discussions. We would also like to thank Allen Knutson for suggesting the idea behind the proof of Proposition \ref{prop:smooth}.

\bibliography{main.bib}
\bibliographystyle{plain}

\Address

\end{document}